\newtheorem{theorem}{Theorem}[section]
\newtheorem{lemma}[theorem]{Lemma}
\newtheorem{proposition}[theorem]{Proposition}
\newtheorem{corollary}[theorem]{Corollary}
\theoremstyle{definition}
\hfill \fbox{}}
\newtheorem*{theorem*}{Theorem}
\theoremstyle{remark}
\newtheorem{remark}[theorem]{Remark}
\numberwithin{equation}{section}
\newcommand{\X}{X}
\newcommand{\EL}{\mathcal{L}}
\newcommand{\C}{\mathbb{C}}
\newcommand{\D}{\mathbb{D}}
\newcommand{\T}{\mathbb{T}}
\newcommand{\RR}{\mathbb{{R}}}
\newcommand{\ZZ}{\mathbb{{Z}}}
\newcommand{\NN}{\mathbb{{N}}}
\newcommand{\CC}{\mathbb{{C}}}
\newcommand{\DD}{\mathbb{{D}}}
\newcommand{\R}{\mathbb{R}}
\newcommand{\B}{\mathcal{B}}
\newcommand{\Lat}{\text{Lat} \ }
\begin{document}

\title[Universal operators]{Universality arising from invertible weighted \\ composition operators}

\author[L. Abadias]{Luciano Abadias}
	\address[L. Abad\'ias]{Departamento de Matem\'aticas, Instituto Universitario de Matem\'aticas y Aplicaciones, Universidad de Zaragoza, 50009 Zaragoza, Spain.}
	\email{labadias@unizar.es}

\author[F. J. González-Doña]{F. Javier González-Doña}
	\address[F. J. González-Doña]{Departamento de Matemáticas, Universidad Carlos III de Madrid,  Avenida de la Universidad 30, 28911 Leganés, Madrid, Spain \and Instituto de Ciencias Matematicas ICMAT (CSIC-UAM-UC3M-UCM), Madrid, Spain}
	\email{fragonza@math.uc3m.es}
	
\author[J. Oliva-Maza]{Jes\'us Oliva-Maza}
	\address[J. Oliva-Maza]{Institute of Mathematics, Polish Academy of Sciences. Chopin Street 12/18, 87-100 Toruń, Poland}
	\email{joliva@impan.pl}
	
\thanks{Authors  have been partially supported by Project PID2022-137294NB-I00, DGI-FEDER, of the MCYTS. First and third author have been partially supported by Project E48\_23R, D.G. Arag\'on, Universidad de Zaragoza, Spain. Third author has also been partially supported by XXXV Grants for Postdoctoral Studies, by Ram\'on Areces Foundation.}

	\subjclass[2010]{47A10, 47A15, 47B38}
	
	\keywords{Universal operators, weighted composition operators, invariant subspace problem, spectrum, spaces of holomorphic functions}


\begin{abstract}
A linear operator $U$ acting boundedly on an infinite-dimensional separable complex Hilbert space $H$ is universal if every linear bounded operator acting on $H$ is similar to a scalar multiple of a restriction of $U$ to one of its invariant subspaces. It turns out that characterizing the lattice of closed invariant subspaces of a universal operator is equivalent to solve the Invariant Subspace Problem for Hilbert spaces. In this paper, we consider invertible weighted hyperbolic composition operators and we prove the universality of the translations by eigenvalues of such operators, acting on Hardy and weighted Bergman spaces. Some consequences for the Banach space case are also discussed.
	
\end{abstract}

\maketitle

\section{Introduction and preliminaries}

The \textit{Invariant Subspace Problem} is, probably, one of the most important open problems in Operator Theory on Hilbert spaces. The problem asks if every linear operator acting boundedly on a infinite-dimensional separable complex Hilbert space has a non-trivial closed invariant subspace. Despite its apparent simplicity and the development of various techniques to approach the problem during the last decades (see, for instance, \cite{CP,RR}), the problem is still unsolved.

\medskip

One of the most promising approaches arising from those techniques consists of studying universal operators. If $H$ is an infinite-dimensional separable complex Hilbert space and  $\EL(H)$ denotes the algebra of linear bounded operators acting on $H$, an operator $U \in \EL(H)$ is \textbf{universal} if for every operator $T\in \EL(H)$ there exist $\lambda \in \C$, $M \in \Lat U$ (that is, a closed invariant subspace for $U$) and an isomorphism $\Phi : H \rightarrow M$ such that 
\begin{equation}\label{definicion universal}
	\Phi T = \lambda U\mid_M \Phi.
\end{equation}
 In other words, $T$ is similar to $\lambda U\mid_M.$

\medskip

Universal operators were first introduced by Rota \cite{Rota1,Rota2}, who also showed that the backward shift operator (of infinite multiplicity) acting on the space $\ell_2(H)$ is, indeed, a universal operator. As an immediate consequence of the definition of universal operators the next result follows:

\begin{theorem}\label{aplicacion universales}
Let $U \in \EL(H)$ be a universal operator. Then, the following conditions are equivalent:
\begin{enumerate}
    \item [(i)] Every operator $T \in \EL(H)$ has a non-trivial closed invariant subspace.
    \item [(ii)] Every minimal non-trivial closed invariant subspace of $U$ has dimension one.
\end{enumerate}
\end{theorem}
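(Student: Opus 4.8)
The proof of Theorem~\ref{aplicacion universales} is a direct unpacking of the definition of universality, so the plan is essentially to chase the similarity relation \eqref{definicion universal} in both directions.

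\medskip

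\textbf{Direction (ii) $\Rightarrow$ (i).} Suppose every minimal non-trivial closed invariant subspace of $U$ is one-dimensional, and let $T \in \EL(H)$ be arbitrary. We must produce a non-trivial closed invariant subspace for $T$. The first step is to apply universality to $T$: there exist $\lambda \in \C$, a subspace $M \in \Lat U$ and an isomorphism $\Phi : H \to M$ with $\Phi T = \lambda U\mid_M \Phi$. If $\lambda = 0$ then $\Phi T = 0$, so $T = 0$ (as $\Phi$ is injective), and then any one-dimensional subspace of $H$ is $T$-invariant and non-trivial, using that $\dim H \ge 2$; so we may assume $\lambda \neq 0$. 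Now the key observation is that $\Phi$ transports $\Lat T$ onto $\{N \in \Lat U : N \subseteq M\}$: indeed $N \subseteq H$ is $T$-invariant iff $\Phi(N)$ is $(\lambda U\mid_M)$-invariant iff $\Phi(N)$ is $U$-invariant (scaling by $\lambda \neq 0$ does not change invariance). Inside $M$ pick any minimal non-trivial element $N_0$ of $\Lat U$ that is contained in $M$ (such an $N_0$ exists because $M$ itself, being isomorphic to $H$, is infinite-dimensional, hence non-trivial, and a Zorn's lemma / minimality argument over the nonempty family of non-trivial closed $U$-invariant subspaces contained in $M$ yields a minimal one; one must take a little care that the family is closed under suitable intersections, or simply invoke that any bounded operator restricted to an infinite-dimensional space has, by transfinite descent, a minimal such subspace --- alternatively, and more cleanly, take a nonzero vector $x \in M$ and let $N_0$ be a minimal non-trivial closed $U\mid_M$-invariant subspace inside the cyclic subspace generated by $x$). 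By hypothesis $\dim N_0 = 1$, and then $\Phi^{-1}(N_0)$ is a one-dimensional, hence non-trivial, closed $T$-invariant subspace of $H$, as required.

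\medskip

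\textbf{Direction (i) $\Rightarrow$ (ii).} Conversely, assume every bounded operator on $H$ has a non-trivial closed invariant subspace, and let $N$ be a minimal non-trivial closed invariant subspace of $U$. If $\dim N \ge 2$, then $N$ is itself an infinite-dimensional or at least $\ge 2$-dimensional closed subspace; in the infinite-dimensional case $N \cong H$, so $U\mid_N$ corresponds under a unitary to some $S \in \EL(H)$, which by (i) has a non-trivial closed invariant subspace, contradicting minimality of $N$. (If $2 \le \dim N < \infty$, then $U\mid_N$ is a matrix, which by basic linear algebra --- existence of an eigenvector over $\C$ --- has a one-dimensional invariant subspace, again contradicting minimality.) Hence $\dim N = 1$.

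\medskip

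The only genuinely delicate point is the existence, used in (ii) $\Rightarrow$ (i), of a \emph{minimal} non-trivial closed invariant subspace of $U$ inside the given $M$: one cannot simply take an arbitrary descending chain without checking that intersections stay non-trivial, so I would argue via a cyclic subspace (which, for a suitable nonzero vector, is itself non-trivial and separable) and then extract minimality there, or else observe that if no minimal one existed one could build an infinite strictly descending chain whose behaviour still has to be reconciled with the statement; in the write-up it is cleanest to phrase (i) $\Leftrightarrow$ (ii) through the contrapositive on the restriction $U\mid_M$. Apart from that bookkeeping, the proof is a routine transfer of structure along the isomorphism $\Phi$.
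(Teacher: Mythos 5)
Your direction (i) $\Rightarrow$ (ii) is correct. The genuine gap is in (ii) $\Rightarrow$ (i): you need to \emph{produce} a minimal non-trivial closed $U$-invariant subspace $N_0$ inside $M$, and such a subspace need not exist, so every variant of the existence argument you sketch fails. Minimality is a descending condition and the intersection of a decreasing chain of non-trivial closed invariant subspaces can be $\{0\}$, so Zorn's lemma does not apply to this family. A concrete obstruction is the unilateral shift $S$ on $H^2$: by Beurling's theorem its non-zero closed invariant subspaces are exactly $\theta H^2$ with $\theta$ inner, and every such subspace properly contains the non-trivial invariant subspace $z\theta H^2$; hence $\Lat S$ has no minimal non-trivial element at all (note $\bigcap_n z^n H^2 = \{0\}$). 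Restricting attention to a cyclic subspace does not help, for the same reason.

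The argument should instead run through the contrapositive, so that $M$ \emph{itself} becomes the minimal subspace. Suppose some $T \in \EL(H)$ has no non-trivial closed invariant subspace. Then $T \neq 0$, so $\lambda \neq 0$, and your (correct) lattice transfer shows that the only closed $U$-invariant subspaces contained in $M$ are $\{0\}$ and $M$. Moreover $M \neq H$: applying universality to the identity operator yields an invariant subspace $M'$ with $U\mid_{M'}$ a non-zero scalar multiple of $I_{M'}$, so $U$ has one-dimensional (hence non-trivial, as $\dim H = \infty$) closed invariant subspaces, whereas $M = H$ would force $\Lat U = \{\{0\}, H\}$. Therefore $M$ is a minimal non-trivial closed invariant subspace of $U$, and it is infinite-dimensional because $M \cong H$, contradicting (ii). This is the intended ``immediate consequence'' of the definition; no selection of minimal subspaces inside a given $M$ is ever needed.
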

Note that a non-trivial closed invariant subspace for $U$ is said to be \textit{minimal} if it contains no proper non-trivial closed invariant subspaces for $U$. As a consequence of the previous result, to characterize explicitly the lattice of closed invariant subspaces of a universal operator acting on $H$ is equivalent to solve the Invariant Subspace Problem for Hilbert spaces. A common strategy following this path is to produce `treatable' examples of universal operators in order to work with concrete universal operators to study their lattice of closed invariant subspaces. Next result, which was proved by Caradus \cite{Caradus}, provides a systematic tool to produce universal operators.

\begin{theorem}\label{Caradus} Let $T \in \EL(H)$ and assume that the following conditions are satisfied:
\begin{enumerate}
    \item [(i)] $\ker T$ has infinite dimension.
    \item [(ii)] $T$ is surjective.
\end{enumerate}
Then, $T$ is universal.
\end{theorem}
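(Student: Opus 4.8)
The plan is to prove universality of $T$ directly from the definition \eqref{definicion universal}, producing, for an arbitrary $A\in\EL(H)$, a concrete similarity onto a closed invariant subspace. The construction rests on two ingredients, one from each hypothesis. First, since $T$ is surjective and $\ker T$ is closed, the restriction of $T$ to $(\ker T)^{\perp}$ is a bounded bijection onto $H$, so by the open mapping theorem it has a bounded inverse; hence there is $R\in\EL(H)$ with $\ran R=(\ker T)^{\perp}$ and $TR=I_{H}$. Second, since $\ker T$ is an infinite-dimensional closed (hence separable) subspace of $H$, it is isometrically isomorphic to $H$, so I may fix an isometry $\Psi\colon H\to H$ with $\ran\Psi=\ker T$.

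Now fix $A\in\EL(H)$, choose $\lambda\in\C$ with $|\lambda|>\|R\|\,\|A\|$, and put $B:=\lambda^{-1}A$, so that $q:=\|R\|\,\|B\|<1$. I would then set
$$\Phi:=\sum_{k=0}^{\infty}R^{k}\Psi B^{k}\in\EL(H),$$
the series converging in operator norm because $\|R^{k}\Psi B^{k}\|\le q^{k}$. Three properties need to be checked. (a) Intertwining $T\Phi=\Phi B$: applying $T$ termwise, the $k=0$ summand dies since $\ran\Psi\subseteq\ker T$, while $TR^{k}\Psi B^{k}=R^{k-1}\Psi B^{k}$ for $k\ge1$, and reindexing the tail yields $\Phi B$. (b) $\Phi$ is bounded below: for $x\in H$ write $\Phi x=\Psi x+R\bigl(\sum_{k\ge1}R^{k-1}\Psi B^{k}x\bigr)$, a sum of a vector in $\ker T$ and a vector in $\ran R=(\ker T)^{\perp}$; orthogonality gives $\|\Phi x\|^{2}\ge\|\Psi x\|^{2}=\|x\|^{2}$, so $\Phi$ is injective with closed range. (c) $M:=\ran\Phi$ is closed and $T$-invariant, since $TM=T\Phi H=\Phi BH\subseteq\Phi H=M$.

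Combining these, $\Phi\colon H\to M$ is an isomorphism onto $M\in\Lat T$ with $\Phi B=T|_{M}\Phi$, i.e.\ $\Phi A=\lambda\,T|_{M}\Phi$, which is precisely \eqref{definicion universal}; as $A$ was arbitrary, $T$ is universal. The only genuinely non-routine point is step (b): everything hinges on having chosen the right inverse $R$ with range exactly $(\ker T)^{\perp}$, so that the ``kernel component'' $\Psi x$ of $\Phi x$ sits orthogonally to the rest and cannot be cancelled by the tail. Both hypotheses are used in a complementary and essential way — surjectivity of $T$ delivers the bounded right inverse $R$, while the infinite-dimensionality of $\ker T$ delivers an isometry $\Psi$ carrying all of $H$ inside $\ker T$.
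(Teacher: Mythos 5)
Your proof is correct: the bounded right inverse $R$ with range $(\ker T)^{\perp}$, the isometry $\Psi$ onto $\ker T$, the norm-convergent series $\Phi=\sum_{k\ge 0}R^{k}\Psi B^{k}$, and the orthogonality argument showing $\|\Phi x\|\ge\|x\|$ all check out, and the intertwining relation matches the definition \eqref{definicion universal}. The paper does not prove this statement but simply cites Caradus, and your argument is essentially the classical one from that reference, so there is nothing to reconcile.
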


\subsection{Universality of (weighted) composition operators}
For a holomorphic mapping $\phi: \DD \to \DD$ (where $\DD := \{z \in \CC \, : \, |z|<1\}$) let $C_\phi$ denote the \textit{composition operator} with symbol $\phi$ given by $C_\phi f := f\circ \phi$,  which acts on spaces of holomorphic functions on $\DD$. Let $\psi$ be an \textit{hyperbolic automorphism}, i.e., a biholomorphic function $\psi: \D \to \D$ which has no fixed point on $\DD$ and its continuous extension to $\overline \D$ has exactly two distinct fixed points (on $\partial \DD$), see Subsection \ref{NotationSubSect} for more details.

Making use of Caradus' result, Nordgren, Rosenthal and Wintrobe showed in \cite{NRW} that the operators $C_\psi - \lambda I$ are universal acting on the Hardy space $H^2$ for such an hyperbolic automorphism $\psi$, where $\lambda$ is an eigenvalue of $C_\psi.$  In particular, Nordgren et al. prove that $\ker C_{\psi}-\lambda I$ is infinite-dimensional by taking products between an eigenvector of eigenvalue $\lambda$ and powers of a Blaschke product.  The methods in \cite{NRW} to prove the surjectivity of the operators $C_{\psi}-\lambda I$ relies in representing the unicellular compression of the operator to one of its closed reducing subspaces as a weighted bilateral shift acting on $\ell_2,$ and such construction does not seem exportable for more general operators, or different spaces. Note that the operators $C_{\psi}$ and $(C_{\psi}-\lambda I)$ share their lattices of closed invariant subspaces, so it is enough to study the lattice of the operator $C_{\psi}$ in order to apply Theorem \ref{aplicacion universales}. The cyclic vectors for hyperbolic composition operators have been widely studied in the recent years (see, for instance, \cite{GG, Matache1, Matache2} and the references therein).
\medskip

Adjoints of certain Toeplitz operators acting on the Hardy and the Bergman space have also been proved to be universal \cite{CG3, CG4}. Even more, Cowen and Gallardo-Gutiérrez provide in \cite{CG2} an alternative (and simpler) proof of the universality of $C_\psi - \lambda I$ than the one given in \cite{NRW}. In particular they prove the surjectiveness of $C_\psi- \lambda I$ by a similarity result between the group $(C_{\psi_t})_{t \in \R}$ (where $(\psi_t)_{t\in \RR}$ is a group of hyperbolic automorphisms with $\psi_1 = \psi$) and a group $(T_{\phi_t}^*)_{t \in \R}$ of adjoints of Toeplitz operators acting on $H^2$. On the other hand, an equivalent result was recently proved by Carmo and Noor \cite{CN} for the hyperbolic non-automorphism case. \medskip

A natural generalization of composition operators is given by \textit{weighted composition operators}, i.e., operators $u C_\phi f := u \cdot (f\circ\phi)$ with holomorphic functions $u: \D \to \CC$ and $\phi: \D \to \D$, and where $\cdot$ stands for the pointwise product of two functions. Weighted composition operators are fundamental objects in mathematical analysis and arise naturally in many situations. For instance, all surjective isometries of the Hardy space $H^p$ and of the weighted Bergman spaces $\mathcal{A}_{\sigma}^p,$ with $p\neq 2,$  are weighted composition operators, see  \cite{forelli1964isometries, kolaski}. Weighted composition operators also appear in the study adjoints of (unweighted) composition operators and in the study of commutants of multiplication operators, and they play a role in the theory of $C_0$-semigroups, just to mention a few examples \cite{cowen1988, konig90}. \medskip

In view of the above, it seems natural to study universality properties of the translations of weighted composition operators acting on Hilbert spaces of holomorphic functions. In resemblance to the unweighted case, one may start such a study by considering invertible operators $uC_\psi$ with hyperbolic symbol $\psi$. This task is far from trivial since, opposite to the unweighted case, the spectrum of these operators is only known under certain assumptions on $u$. For instance, when considering Hardy and weighted Bergman spaces, the spectrum of such an operator $uC_\psi$ is only known if $u$ has continuous extension to $\overline \D$ and the absolute value of the evaluations of $u$ at the fixed points of $\psi$ satisfy certain inequalities \cite{gunatillake, hyvarinen2013spectra}. Its spectrum is also known if the operator $uC_\psi$ can be embedded into a $C_0$-group of weighted composition operators \cite{AGMO}. 
\medskip

In the main result of this paper (Theorem \ref{universal}), we give a sufficient condition on $u$ to obtain the universality of the translations $uC_\psi - \lambda I$ for suitable $\lambda \in \CC$. This sufficient condition, which is an inequality involving $\liminf_{z\to a} |u(z)|$ and $\limsup_{z\to b} |u(z)|$ where $a,b$ are the fixed points of $\psi$, is very explicit and easy to check on a given $u$, which turns Theorem \ref{universal} into a powerful tool to build a large family of universal operators. In particular, in the unweighted case $u=1$ our result extends the already known results in the Hardy space $H^2$ \cite{CG2, NRW} to the weighted Bergman spaces $\mathcal A_\sigma^2$. Even more, our techniques do not rely on Hilbert space methods and are easily adaptable to the Banach space setting for Hardy spaces $H^p$ and weighted Bergman spaces $\mathcal A_\sigma^p$, as we point out in Section  \ref{BanachSpaceSection}. On the other hand, in order to prove the universality of $uC_\psi - \lambda I$ we prove new results on the spectral sets of $uC_\psi$ which seem interesting by themselves and are meaningful contributions in the understanding of weighted composition operators \cite{gunatillake, hyvarinen2013spectra}.

\subsection{Hilbert spaces of analytic functions on $\D$}

Let us denote by $\mathcal{O}(\D)$ the Fr\'echet space of all holomorphic functions on the disc. Along the paper, we work with Hardy and weighted Bergman spaces on $\D.$ We recall the definitions of these Hilbert subspaces of $\mathcal O(\DD)$. 

\medskip

We denote by $H^2$ the Hardy space on $\D$ formed by all functions $f\in \mathcal{O}(\D)$ such that
$$
\|f\|_{H^2} := \sup_{0<r<1} \left(\int_0^{2\pi} |f(re^{i\theta})|^2 \,\frac{d\theta}{2\pi} \right)^{1/2} < \infty,
$$
endowed with the norm $\Vert\cdot\Vert_{H^2}$.

\medskip

Let $\sigma>-1.$ The weighted Bergman space, denoted by $\mathcal{A}_\sigma^2,$ is formed by all holomorphic functions in $\D$ such that  
$$
\|f\|_{\mathcal{A}_\sigma^2} := \left(\int_{\D} |f(z))|^2 (1-|z|^2)^\sigma dA(z)\right)^{1/2} < \infty,
$$ 
where $dA$ is the Lebesgue measure on $\D$.

\medskip

For every $\phi\in Aut(\D)$ and $\gamma\geq 0,$ let us denote $C_{\phi,\gamma}$ the weighted composition operator  given by $C_{\phi,\gamma}:=(\phi')^{\gamma}C_{\phi},$  where $\phi'$ is the derivative of $\phi.$ These operators are isometries in $H^2$ and in $\mathcal{A}_\sigma^2$ for $\gamma=1/2$ and $\gamma=(\sigma+2)/2$, see \cite[Theorem 2]{forelli1964isometries} and \cite[Theorem 4.6]{hyvarinen2013spectra}.

\subsection{Notation}\label{NotationSubSect}

We set some notation for the remainder of the paper:

\begin{enumerate}
	\item $H$ denotes indistinctly the Hardy or weighted Bergman space, and $\gamma$ the corresponding parameter for which the operators $C_{\phi,\gamma}$ are isometries on $H$ for every $\phi$ automorphism of $\D.$ Also, $\|\cdot\|_H$ denotes the corresponding space norm introduced above, and $\|\cdot\|_{\mathcal{L}(H)}$ the operator norm.
	\item $\psi: \DD \to \DD$ is a hyperbolic automorphism of $\DD$, i.e., a biholomorphic mapping from $\DD$ onto $\DD$ (hence a Möbius transform) with no fixed points on $\DD$ and exactly two fixed points on $\partial \DD$. Let  $a$ (attractive) and $b$ (repulsive) $\in \partial \DD$ be such fixed points, that is, $\lim_{n\to\infty} \psi_n(z)=a$ and $\lim_{n\to-\infty} \psi_n(z)=b$ for each $z\in \D$ where $\psi_n := \underbrace{\psi \circ \ldots \circ \psi}_{n \text{ times}}$ and $\psi_{-n}:=(\psi^{-1})_n$ for $n \in \NN$. 
	
	It is well known that  $\psi$ is holomorphic in a disc (centered at the origin) of radius strictly greater than $1$ and $\psi'(a) \in (0,1), \psi'(b)\in (1,\infty)$ with $\psi'(a) \psi'(b) = 1$. 
	Moreover, one has 
	\begin{equation}\label{automorphismEq}
		\psi(z) = \frac{(b \psi'(a)-a)z +ab(1-\psi'(a))}{(\psi'(a)-1)z +b-a\psi'(a)}, \qquad z \in \DD. 
	\end{equation}
	Also, there exists a unique $r \in (0,1)$ for which $\psi$ is conjugate to the autormorphism $z \mapsto \frac{r+z}{1+rz}$ with fixed points $-1$ and $1$.
	We refer the reader to \cite[Sections 1.2 \& 8.2]{bracci2020continuous} for these and more details about hyperbolic autormorphisms of $\DD$.
	\item $u$ is a bounded holomorphic function on $\DD$ which is bounded away from zero, i.e., $\inf_{z\in \DD}|u(z)|>0$. Moreover, we set
	\begin{align*}
		A^+ &:= \limsup_{\DD \ni z\to a} |u(z)|, \qquad A^-:= \liminf_{\DD \ni z\to a} |u(z)|, \\
		B^+ &:=\limsup_{\DD \ni z\to b} |u(z)|, \qquad B^-:= \liminf_{\DD \ni z\to b} |u(z)|.
	\end{align*}
\end{enumerate} 
It is known that item (3) above is an equivalent condition to the boundedness and invertibility of $u C_{\psi}$ on $H$, see \cite[Theorem 2.0.1]{gunatillake} and \cite[Corollary 3.7]{hyvarinen2013spectra}.
\medskip

Also, through the rest of the paper $\|\cdot\|_{\infty}$ is the supremum norm for bounded holomorphic functions on $\D$, and $r(T)_{\mathcal{L}(H)}$ denotes the spectral radius of a bounded operator $T \in \mathcal L(H).$

\subsection{Main results} The main result of the paper is the following one, which is a direct consequence of Caradus' Theorem (Theorem \ref{Caradus}) once we have proven that such operators $uC_{\psi}-\lambda I$ satisfy the hypothesis of the aforementioned result, see Section \ref{SupraInjec} (Theorem \ref{Supra} and Theorem \ref{Injec}).

\begin{theorem}\label{universal}
Every invertible  weighted composition operator $uC_{\psi}$ with hyperbolic symbol acting on $H$ such that $\frac{B^+}{(\psi'(b))^{\gamma}}< \frac{A^-}{(\psi'(a))^{\gamma}},$ satisfies that $uC_{\psi}-\lambda I$ is universal for each $\lambda\in\C$ such that $\frac{B^+}{(\psi'(b))^{\gamma}}< |\lambda|<\frac{A^-}{(\psi'(a))^{\gamma}}.$
\end{theorem}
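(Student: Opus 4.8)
The plan is to invoke Caradus' Theorem (Theorem \ref{Caradus}), so the whole proof reduces to showing that for $\lambda$ in the prescribed annulus, the operator $uC_\psi - \lambda I$ acting on $H$ is surjective and has infinite-dimensional kernel. The hypothesis $\frac{B^+}{(\psi'(b))^\gamma} < |\lambda| < \frac{A^-}{(\psi'(a))^\gamma}$ is designed precisely to place $\lambda$ between two spectral-type quantities attached to the two fixed points of $\psi$; morally, $\frac{A^-}{(\psi'(a))^\gamma}$ controls how small $uC_\psi$ can be ``near $a$'' and $\frac{B^+}{(\psi'(b))^\gamma}$ how large it can be ``near $b$,'' and it is this gap that makes both the surjectivity and the infinite-dimensional kernel possible. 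Since the lattice of invariant subspaces of $uC_\psi$ coincides with that of $uC_\psi - \lambda I$, Theorem \ref{aplicacion universales} then gives the connection with the Invariant Subspace Problem as a bonus, but that is not needed for the statement itself.

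For the infinite-dimensional kernel, I would first exhibit a single nonzero eigenvector $g_\lambda$ with $uC_\psi g_\lambda = \lambda g_\lambda$; formally, one writes $g_\lambda(z) = \prod_{n\geq 0} \frac{\lambda}{u(\psi_n(z))} \cdot (\text{something})$ coming from iterating the functional equation $u(z) g(\psi(z)) = \lambda g(z)$, and the real work is checking that an appropriate such solution actually lies in $H$ — this is where the inequality $|\lambda| < \frac{A^-}{(\psi'(a))^\gamma}$ on the attractive side and $|\lambda| > \frac{B^+}{(\psi'(b))^\gamma}$ on the repulsive side enter, ensuring the relevant infinite product converges and the resulting function has the right growth near $a$ and near $b$. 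Once one eigenvector in the kernel of $uC_\psi - \lambda I$ is found, one enlarges the kernel by multiplying by powers of a Blaschke product $B$ whose zero set is invariant under $\psi$ in the appropriate sense (as in \cite{NRW}): if $uC_\psi g = \lambda g$ then one checks $uC_\psi(B^k g) = \lambda B^k g$ up to a unimodular factor that can be absorbed, producing infinitely many linearly independent eigenvectors, all of which remain in $H$ because $|B| \le 1$. This is essentially the Nordgren–Rosenthal–Wintrobe trick adapted to the weighted setting, and I expect it to go through once the base eigenvector is under control.

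The surjectivity is the heart of the matter and the step I expect to be the main obstacle. Rather than the bilateral-shift model of \cite{NRW}, I would follow the cleaner route suggested by the remark that the techniques are not Hilbert-space-specific: one wants to write $uC_\psi - \lambda I$ as $-\lambda(I - \lambda^{-1} uC_\psi)$ and split $H$, or at least the problem, into a ``part near $a$'' and a ``part near $b$,'' on which $\lambda^{-1}uC_\psi$ is respectively a contraction-like and an expansion-like operator, so that $I - \lambda^{-1}uC_\psi$ is invertible by a Neumann series on each piece and the pieces are reassembled. Concretely, using that $uC_\psi$ is invertible, one studies the two spectral quantities $r(uC_\psi)_{\mathcal L(H)}$ and $r((uC_\psi)^{-1})_{\mathcal L(H)}^{-1}$ and shows, via the isometry $C_{\psi,\gamma}$ and the factorization $uC_\psi = (u/(\psi')^\gamma)\, C_{\psi,\gamma}$, that these are governed by $\frac{A^+}{(\psi'(a))^\gamma}$-type and $\frac{B^-}{(\psi'(b))^\gamma}$-type expressions; the key estimate is that iterating $uC_\psi$ pushes mass toward $a$ where $|u|/(\psi')^\gamma \approx A^\pm/(\psi'(a))^\gamma$, and iterating the inverse pushes mass toward $b$. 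The delicate point is that $uC_\psi$ is not itself decomposable into two reducing halves, so one must argue with a genuine spectral decomposition (a Riesz projection splitting the spectrum of $uC_\psi$ into the annulus $|z| < |\lambda|$ and $|z| > |\lambda|$ parts, which is legitimate precisely when no spectrum meets $|z| = |\lambda|$) — establishing that the spectrum really does avoid the circle $|z| = |\lambda|$ under the stated hypothesis, presumably proved in Section \ref{SupraInjec}, is the crux. Granting that spectral gap, $uC_\psi - \lambda I$ restricted to each spectral subspace is invertible (on the inner part because $uC_\psi$ there has spectral radius $< |\lambda|$, on the outer part because its inverse there has spectral radius $< |\lambda|^{-1}$), hence $uC_\psi - \lambda I$ is surjective on all of $H$, and Caradus' Theorem finishes the proof.
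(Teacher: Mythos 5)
Your overall frame (Caradus' theorem, reduced to surjectivity plus an infinite-dimensional kernel) is the paper's frame, but your surjectivity argument rests on a step that cannot be carried out. You propose to split $H$ by a Riesz projection associated with the parts of $\sigma(uC_\psi)$ inside and outside the circle $|z|=|\lambda|$, and you identify "showing the spectrum avoids that circle" as the crux. The opposite is true: Proposition \ref{inclusionSpec} shows that the \emph{entire closed annulus} $\bigl\{\frac{B^+}{(\psi'(b))^{\gamma}} \leq |z| \leq \frac{A^-}{(\psi'(a))^{\gamma}}\bigr\}$ is contained in $\sigma(uC_\psi)$, so for every admissible $\lambda$ the whole circle $|z|=|\lambda|$ lies inside the spectrum and no such Riesz splitting exists. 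Your plan is in fact internally inconsistent: you also need $\lambda$ to be an eigenvalue, which already puts a point of that circle in the spectrum. The whole difficulty of the theorem is precisely that $\lambda$ is an interior point of the spectrum (indeed an eigenvalue of infinite multiplicity) and yet $\lambda I - uC_\psi$ is surjective. The paper's substitute for your spectral subspaces is the pair of range spaces $H_{\mu,0}=\omega_{\mu,0}H$ and $H_{0,\nu}=\omega_{0,\nu}H$ with $\omega_{\mu,\nu}(z)=(a-z)^{\mu}(b-z)^{\nu}$: these are \emph{not} closed in $H$, but each is a Banach space continuously embedded in $H$ on which $uC_\psi$ is isometrically equivalent (Lemma \ref{isometricCopy}) to a weighted composition operator whose weight has been damped near $a$ (resp.\ amplified near $b$), so that by Proposition \ref{spectralRadiusProp} its spectral radius falls below $|\lambda|$ (resp.\ the spectral radius of its inverse falls below $|\lambda|^{-1}$). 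Hence $\lambda I-uC_\psi$ is invertible on each of these two auxiliary spaces, and surjectivity on $H$ follows from the purely algebraic (non-direct) decomposition $H=H_{\mu,0}+H_{0,\nu}$ of Lemma \ref{H2decomposition} — no direct-sum or spectral decomposition of $H$ is ever used.

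Your kernel argument also has two gaps. First, the infinite-product construction of a base eigenvector requires $|u|$ to have genuine limits along orbits converging to the fixed points, whereas the theorem only assumes control of $\liminf$ and $\limsup$ (one may have $A^-<A^+$), so the product need not converge in this generality. The paper gets the base eigenvector for free: $\lambda\in\sigma(uC_\psi)$ by Proposition \ref{inclusionSpec}, while $\lambda I-uC_\psi$ is surjective by Theorem \ref{Supra}, so it cannot be injective. Second, if $B$ is a Blaschke product with $B\circ\psi=\zeta B$ for a unimodular constant $\zeta$, then $uC_\psi(B^k g)=\zeta^k\lambda\, B^k g$; the eigenvalue is $\zeta^k\lambda$, not $\lambda$, and "absorbing" the factor $\zeta^k$ is exactly the point that needs an argument. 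The paper avoids this entirely by multiplying the eigenvector $f$ by the explicit bounded holomorphic functions $g_k(z)=\bigl(\frac{b-z}{a-z}\bigr)^{2\pi k i/\delta}$ with $\delta=-\log\psi'(a)$, which satisfy $C_\psi g_k=g_k$ \emph{exactly}, so that each $fg_k$ is an eigenvector for the same eigenvalue $\lambda$ and the family is linearly independent (Theorem \ref{Injec}).
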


As a consequence, letting $u=1$ on the previous result, we obtain an extension of Nordgren, Rosenthal and Wintrobe Theorem to a wider collection of spaces. Note that in the unweighted case ($u=1$) $\sigma(C_\psi)=\{\lambda\in\C\,:\, \frac{1}{(\psi'(a))^{\gamma}}\leq |\lambda|\leq \frac{1}{(\psi'(b))^{\gamma}} \},$ see for instance \cite{hyvarinen2013spectra}.
\begin{corollary}
	If $\psi : \D\rightarrow \D$ is a hyperbolic automorphism, then the operator $C_\psi -\lambda I$ acting on $H$ is universal for every $\lambda$ in the interior of $\sigma(C_\psi).$
\end{corollary}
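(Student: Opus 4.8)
The plan is to derive the corollary directly from Theorem~\ref{universal} by specializing to $u \equiv 1$. First I would check that the constant function $u = 1$ meets the standing hypotheses of Subsection~\ref{NotationSubSect}: it is bounded, holomorphic on $\D$ and bounded away from zero, so $C_\psi = u C_\psi$ is bounded and invertible on $H$ (by the remark following item~(3) of that subsection), and clearly $A^+ = A^- = B^+ = B^- = 1$.

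Next I would verify the inequality required in Theorem~\ref{universal}. Since $\psi$ is a hyperbolic automorphism one has $0 < \psi'(a) < 1 < \psi'(b)$ (indeed $\psi'(a)\psi'(b) = 1$), and the relevant exponent is $\gamma > 0$ in both cases ($\gamma = 1/2$ for $H^2$, $\gamma = (\sigma+2)/2 > 1/2$ for $\mathcal{A}_\sigma^2$). As $t \mapsto t^\gamma$ is increasing on $(0,\infty)$, this gives $(\psi'(a))^\gamma < (\psi'(b))^\gamma$, hence
\[
\frac{B^+}{(\psi'(b))^\gamma} = \frac{1}{(\psi'(b))^\gamma} < \frac{1}{(\psi'(a))^\gamma} = \frac{A^-}{(\psi'(a))^\gamma},
\]
which is exactly the assumption of Theorem~\ref{universal}. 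Applying that theorem, $C_\psi - \lambda I$ is universal on $H$ for every $\lambda \in \C$ with $\frac{1}{(\psi'(b))^\gamma} < |\lambda| < \frac{1}{(\psi'(a))^\gamma}$.

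Finally I would match this open annulus with the interior of $\sigma(C_\psi)$. Recalling that in the unweighted case $\sigma(C_\psi)$ is the closed annulus $\{\lambda \in \C : \frac{1}{(\psi'(b))^\gamma} \le |\lambda| \le \frac{1}{(\psi'(a))^\gamma}\}$ (see e.g.\ \cite{hyvarinen2013spectra}), its topological interior in $\C$ is precisely $\{\lambda \in \C : \frac{1}{(\psi'(b))^\gamma} < |\lambda| < \frac{1}{(\psi'(a))^\gamma}\}$, which is the range of parameters produced in the previous step. This proves the corollary.

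Since the statement is a pure specialization, there is essentially no obstacle at this stage: all the substance lies in Theorem~\ref{universal} (that is, in Theorems~\ref{Supra} and~\ref{Injec}). The one point to be careful about is quoting the spectral annulus with the correct ordering of its radii — the attractive/repulsive roles of $a,b$ force $\frac{1}{(\psi'(b))^\gamma} < \frac{1}{(\psi'(a))^\gamma}$ — so that the open annulus obtained from Theorem~\ref{universal} genuinely coincides with the interior of $\sigma(C_\psi)$.
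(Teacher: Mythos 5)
Your proof is correct and is exactly the paper's argument: the corollary is obtained by specializing Theorem \ref{universal} to $u\equiv 1$, where $A^{\pm}=B^{\pm}=1$ and the required inequality $\frac{1}{(\psi'(b))^{\gamma}}<\frac{1}{(\psi'(a))^{\gamma}}$ follows from $0<\psi'(a)<1<\psi'(b)$ and $\gamma>0$. Your care with the ordering of the two radii is well placed: the paper's own in-text description of $\sigma(C_\psi)$ writes the annulus with the bounds reversed (which would make the set empty), evidently a typo, and your version is the correct one.
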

Since $uC_{\psi}$ and $uC_{\psi}-\lambda I$ share their lattice of invariant subspaces, it is a direct consequence of Theorem \ref{aplicacion universales} and Theorem \ref{universal} the following result.

\begin{theorem}\label{teorema subespacios}
Every linear bounded operator acting on $H$ has a non-trivial closed invariant subspace if and only if every minimal non-trivial closed invariant subspace of an invertible weighted composition operator $uC_{\psi}$ with hyperbolic symbol such that $\frac{B^+}{(\psi'(b))^{\gamma}}< \frac{A^-}{(\psi'(a))^{\gamma}}$ has dimension one.
\end{theorem}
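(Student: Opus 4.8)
The plan is to derive this equivalence as a formal consequence of Theorem~\ref{aplicacion universales} and Theorem~\ref{universal}, the only subtlety being the handling of the two quantifiers in the statement. The key elementary observation is that for any $\lambda \in \CC$ the operators $uC_\psi$ and $uC_\psi - \lambda I$ have exactly the same lattice of closed invariant subspaces, since a subspace $M$ is invariant for $uC_\psi$ if and only if it is invariant for $uC_\psi - \lambda I$. Hence the assertion ``every minimal non-trivial closed invariant subspace of $uC_\psi$ has dimension one'' is equivalent to the same assertion for $uC_\psi - \lambda I$.

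For the direct implication, assume every bounded operator on $H$ has a non-trivial closed invariant subspace, and fix any invertible weighted composition operator $uC_\psi$ with hyperbolic symbol satisfying $\frac{B^+}{(\psi'(b))^{\gamma}} < \frac{A^-}{(\psi'(a))^{\gamma}}$. Because this inequality is strict, the annulus $\{\lambda \in \CC : \frac{B^+}{(\psi'(b))^{\gamma}} < |\lambda| < \frac{A^-}{(\psi'(a))^{\gamma}}\}$ is non-empty; choose $\lambda$ in it. By Theorem~\ref{universal} the operator $uC_\psi - \lambda I$ is universal, so Theorem~\ref{aplicacion universales} applies, and implication (i)$\Rightarrow$(ii) there gives that every minimal non-trivial closed invariant subspace of $uC_\psi - \lambda I$, hence of $uC_\psi$, has dimension one.

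For the converse, it suffices to produce one member of the family to which Theorem~\ref{aplicacion universales} can be applied. Taking $u \equiv 1$ one has $A^{\pm} = B^{\pm} = 1$, and since $\psi'(a) \in (0,1)$ and $\psi'(b) \in (1,\infty)$ with $\psi'(a)\psi'(b)=1$, the inequality $\frac{B^+}{(\psi'(b))^{\gamma}} = (\psi'(a))^{\gamma} < 1 < (\psi'(b))^{\gamma} = \frac{A^-}{(\psi'(a))^{\gamma}}$ holds, so $C_\psi$ lies in the family and, choosing $\lambda = 1$, the operator $C_\psi - I$ is universal by Theorem~\ref{universal}. If every minimal non-trivial closed invariant subspace of every operator in the family has dimension one, then in particular this holds for $C_\psi$, hence for $C_\psi - I$, and Theorem~\ref{aplicacion universales}, implication (ii)$\Rightarrow$(i), yields that every bounded operator on $H$ has a non-trivial closed invariant subspace.

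The only point requiring attention — the ``main obstacle'', such as it is — is precisely this asymmetry of quantifiers: the ``if'' direction needs just one universal operator in the family, whereas the ``only if'' direction must hold for all of them at once; both halves are settled by combining the non-emptiness of the relevant spectral annulus with the invariance of the lattice under translation by $\lambda I$, after which Caradus' theorem (through Theorem~\ref{universal}) and Theorem~\ref{aplicacion universales} do the rest.
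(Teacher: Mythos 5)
Your argument is correct and is exactly the route the paper takes: the paper proves this theorem in one line by noting that $uC_{\psi}$ and $uC_{\psi}-\lambda I$ share their lattice of closed invariant subspaces and then invoking Theorem~\ref{aplicacion universales} together with Theorem~\ref{universal}. Your write-up merely makes explicit the choice of $\lambda$ in the non-empty annulus and the specimen $u\equiv 1$ for the converse direction, both of which are sound.
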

Observe that, given an attractive and a repulsive fixed point $a,b \in \T$ and $u:\D\rightarrow \C$ a bounded holomorphic function which is bounded away from zero, there always exists a hyperbolic automorphism $\psi$ such that $\frac{B^+}{(\psi'(b))^{\gamma}}< \frac{A^-}{(\psi'(a))^{\gamma}}$ due to \eqref{automorphismEq}. This shows that the previous result can be applied for any bounded symbol $u$ inducing an invertible weighted composition operator $uC_\psi$, for a concrete hyperbolic automorphism $\psi.$  
\subsection{Summary} 

The paper is organized as follows. In Section \ref{section 2} we present Hilbert range spaces of multiplication operators, defined by the fixed points of the hyperbolic automorphism that we are considering. In particular, we state two technical lemmas which are key points in the proofs of the main results in both Section \ref{section 3} and Section \ref{SupraInjec}.  Section \ref{section 3} shows spectral inclusions for invertible weighted composition hyperbolic operators. Such results allow to improve some known spectral considerations in the literature, as we will remark in the last section. In Section \ref{SupraInjec} we prove the sufficient conditions that we need to apply Caradus' theorem in order to get the main results presented in this introduction. Finally, in Section \ref{BanachSpaceSection}, we discuss about some Banach space considerations related to the results along the paper. In fact, the techniques developed in the paper can be considered within the Banach space field.

\section{Range spaces}\label{section 2}

For $\mu,\nu\geq 0$, set $\omega_{\mu,\nu}(z) := (a-z)^{\mu} (b-z)^\nu$, $z \in \DD$. Note that  $\omega_{\mu,\nu}:\D\to \C$ are non-vanishing holomorphic functions satisfying next properties: \begin{enumerate}
		\item $|\omega_{\mu,\nu}|$ is bounded,
		\item $\displaystyle{\left|\frac{\omega_{\mu,\nu} \circ \psi}{\omega_{\mu,\nu}}\right|}$ is bounded and bounded away from zero.
	\end{enumerate}
The first one is clear. For the second one, note that $\displaystyle{\left|\frac{\omega_{\mu,\nu}\circ \psi(z)}{\omega_{\mu,\nu}(z)}\right|}$ is clearly bounded and bounded away from zero if $z$ is bounded away from $a,b$. Moreover,  $\frac{\omega_{\mu,\nu} \circ \psi}{\omega_{\mu,\nu}}$ is holomorphic at $a,b$ since $\psi-a$ and $\psi-b$ have zeroes of order $1$ at $a,b$ respectively (and $\psi$ is holomorphic in a disc of radius strictly greater than $1$). Moreover, 
one has  
\begin{align}\label{limitsEq}
	\lim_{\DD \ni z \to a} \frac{\omega_{\mu,\nu}\circ \psi(z)}{\omega_{\mu,\nu}(z)} = (\psi'(a))^\mu, \qquad \lim_{\DD \ni z \to b} \frac{\omega_{\mu,\nu}\circ \psi(z)}{\omega_{\mu,\nu}(z)} = (\psi'(b))^\nu.
\end{align}
Alternatively, one can check these statements with a few computations using \eqref{automorphismEq}.	

For $\mu,\nu \geq 0$, we set
$$H_{\mu,\nu}:= \{\omega_{\mu,\nu} f \mid f \in H\}.
$$
Therefore, $H_{\mu,\nu}$ is the range space of the injective bounded  operator $M_{\omega_{\mu,\nu} }(f) := \omega_{\mu,\nu} f$ acting on $H$. Thus, $H_{\mu,\nu}$ is a Banach space endowed with the norm
	$$\|f\|_{H_{\mu,\nu}} := \left\| \frac{f}{\omega_{\mu,\nu}}\right\|_H, \quad f \in H_{\mu,\nu}.
	$$
	With such a norm, $M_{\omega_{\mu,\nu}} : H \to H_{\mu,\nu}$ is an isometric isomorphism, and the inclusion $H_{\mu,\nu}  \hookrightarrow H$ is continuous (recall that $\omega_{\mu,\nu}$ is a multiplier of $H$).
	
We need the following key lemmas to get the main results.
	
	\begin{lemma}\label{isometricCopy}
		Let $\mu,\nu \geq 0$. The weighted composition operator $uC_\psi$ is a bounded invertible operator acting on $H_{\mu,\nu}$. Moreover, it is isometrically equivalent to the operator $\frac{\omega_{\mu,\nu} \circ \psi}{\omega_{\mu,\nu}} u C_\psi$ acting on $H$.
	\end{lemma}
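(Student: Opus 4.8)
The plan is to exploit the isometric isomorphism $M_{\omega_{\mu,\nu}} : H \to H_{\mu,\nu}$ and simply conjugate $uC_\psi$ through it. Concretely, I would compute the operator $M_{\omega_{\mu,\nu}}^{-1} \, (uC_\psi) \, M_{\omega_{\mu,\nu}}$ acting on $H$ and check that it equals multiplication by $\frac{\omega_{\mu,\nu}\circ\psi}{\omega_{\mu,\nu}}$ followed by $uC_\psi$. For $f \in H$ we have $M_{\omega_{\mu,\nu}} f = \omega_{\mu,\nu} f \in H_{\mu,\nu}$, then $uC_\psi(\omega_{\mu,\nu} f) = u \cdot (\omega_{\mu,\nu} f)\circ\psi = u \cdot (\omega_{\mu,\nu}\circ\psi) \cdot (f\circ\psi)$, and finally applying $M_{\omega_{\mu,\nu}}^{-1}$ (that is, dividing by $\omega_{\mu,\nu}$) yields $\frac{\omega_{\mu,\nu}\circ\psi}{\omega_{\mu,\nu}} \, u \, (f\circ\psi) = \left(\frac{\omega_{\mu,\nu}\circ\psi}{\omega_{\mu,\nu}} u C_\psi\right) f$. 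This is the claimed isometric equivalence, valid as an identity of linear maps on $\mathcal O(\DD)$.

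To make this rigorous as a statement about bounded invertible operators on the respective Banach spaces, I would argue as follows. First, the function $\frac{\omega_{\mu,\nu}\circ\psi}{\omega_{\mu,\nu}}$ is holomorphic on $\DD$, bounded, and bounded away from zero by property (2) established just before the lemma; hence $\frac{\omega_{\mu,\nu}\circ\psi}{\omega_{\mu,\nu}} u$ is again holomorphic, bounded and bounded away from zero, since $u$ has these properties by assumption (3) in the Notation. By the cited characterization (\cite[Theorem 2.0.1]{gunatillake}, \cite[Corollary 3.7]{hyvarinen2013spectra}) the operator $\frac{\omega_{\mu,\nu}\circ\psi}{\omega_{\mu,\nu}} u C_\psi$ is therefore bounded and invertible on $H$. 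Since $M_{\omega_{\mu,\nu}} : H \to H_{\mu,\nu}$ is an isometric isomorphism by construction of the norm on $H_{\mu,\nu}$, the conjugate operator $M_{\omega_{\mu,\nu}} \left(\frac{\omega_{\mu,\nu}\circ\psi}{\omega_{\mu,\nu}} u C_\psi\right) M_{\omega_{\mu,\nu}}^{-1}$ is bounded and invertible on $H_{\mu,\nu}$, and by the computation above this conjugate is exactly $uC_\psi$ acting on $H_{\mu,\nu}$. This simultaneously proves that $uC_\psi$ maps $H_{\mu,\nu}$ boundedly and invertibly into itself and that it is isometrically equivalent, via $M_{\omega_{\mu,\nu}}$, to $\frac{\omega_{\mu,\nu}\circ\psi}{\omega_{\mu,\nu}} u C_\psi$ on $H$.

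There is essentially no hard obstacle here; the statement is a bookkeeping exercise once one has recorded that $\frac{\omega_{\mu,\nu}\circ\psi}{\omega_{\mu,\nu}}$ is a bounded, bounded-away-from-zero multiplier (which the paragraph preceding the lemma does). The one point worth stating with care is that $uC_\psi$ is \emph{a priori} only defined as an operator on $H$ (or on all of $\mathcal O(\DD)$), so one must check that it genuinely preserves the subspace $H_{\mu,\nu}$ before speaking of ``$uC_\psi$ acting on $H_{\mu,\nu}$''; this is precisely what the conjugation identity delivers, since $M_{\omega_{\mu,\nu}}(H) = H_{\mu,\nu}$ by definition. I would also remark that the same computation shows the conjugation is compatible with taking powers, which will be the useful form in later sections when estimating spectral radii of $uC_\psi$ restricted to various $H_{\mu,\nu}$.
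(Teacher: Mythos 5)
Your proposal is correct and follows essentially the same route as the paper: conjugating $uC_\psi$ by the isometric isomorphism $M_{\omega_{\mu,\nu}}$ and reading off the identity $u C_\psi = M_{\omega_{\mu,\nu}} \frac{\omega_{\mu,\nu} \circ \psi}{\omega_{\mu,\nu}} u C_\psi (M_{\omega_{\mu,\nu}})^{-1}$. The paper's proof is just the one-line computation; your additional remarks on why the new weight is bounded and bounded away from zero (hence the conjugated operator is bounded and invertible on $H$) make explicit what the paper leaves implicit.
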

	\begin{proof}
		Note that
		\begin{align*}
			u C_\psi (M_{\omega_{\mu,\nu}} f) = \omega_{\mu,\nu} \frac{\omega_{\mu,\nu} \circ \psi}{\omega_{\mu,\nu}} u C_\psi f = M_{\omega_{\mu,\nu}} \left( \frac{\omega_{\mu,\nu} \circ \psi}{\omega_{\mu,\nu}} u C_\psi f\right), \quad f \in H.
		\end{align*}
		That is, $u C_\psi = M_{\omega_{\mu,\nu}} \frac{\omega_{\mu,\nu} \circ \psi}{\omega_{\mu,\nu}} u C_\psi (M_{\omega_{\mu,\nu}})^{-1}$. As $M_{\omega_{\mu,\nu}}: H \to H_{\mu,\nu}$ is an isometric isomorphism, the claim follows.
	\end{proof}

	\begin{lemma}\label{H2decomposition}
		Let $\mu,\nu \geq 0$. Then $H = H_{\mu,0} + H_{0,\nu}$.
	\end{lemma}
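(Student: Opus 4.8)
The plan is to reduce the claim to a polynomial Bézout identity. One inclusion is immediate: by property~(1) stated above, $\omega_{\mu,0}$ and $\omega_{0,\nu}$ are bounded holomorphic functions on $\DD$, hence multipliers of $H$ (recall that $H^{\infty}$ multiplies $H^2$ and $\mathcal{A}_\sigma^2$ boundedly, with multiplier norm equal to the supremum norm), so $\omega_{\mu,0}f,\omega_{0,\nu}g\in H$ for all $f,g\in H$, giving $H_{\mu,0}+H_{0,\nu}\subseteq H$. The real content is therefore the reverse inclusion $H\subseteq H_{\mu,0}+H_{0,\nu}$, and this is what I would prove.

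First I would dispose of the fractional exponents. Fix integers $m\geq\mu$ and $n\geq\nu$. Since $\omega_{m-\mu,0}=(a-z)^{m-\mu}$ and $\omega_{0,n-\nu}=(b-z)^{n-\nu}$ are bounded on $\DD$, and (with the branches used to define $\omega_{\mu,\nu}$) $\omega_{\mu,0}\,\omega_{m-\mu,0}=\omega_{m,0}$ and $\omega_{0,\nu}\,\omega_{0,n-\nu}=\omega_{0,n}$, one gets for every $h\in H$
\[
\omega_{m,0}h=\omega_{\mu,0}\,(\omega_{m-\mu,0}\,h)\in H_{\mu,0},\qquad \omega_{0,n}h=\omega_{0,\nu}\,(\omega_{0,n-\nu}\,h)\in H_{0,\nu},
\]
so that $H_{m,0}\subseteq H_{\mu,0}$ and $H_{0,n}\subseteq H_{0,\nu}$. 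Hence it suffices to show $H\subseteq H_{m,0}+H_{0,n}$ for these integers.

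The key step is then purely algebraic: the polynomials $(a-z)^m$ and $(b-z)^n$ are coprime in the principal ideal domain $\C[z]$, because their only zeros are $z=a$ and $z=b$ respectively and $a\neq b$. By Bézout's identity there exist polynomials $p,q\in\C[z]$ with $(a-z)^m p(z)+(b-z)^n q(z)=1$ for all $z\in\C$. Given $h\in H$, the polynomials $p,q$ (being in $H^{\infty}$) multiply $H$, so $ph,qh\in H$; multiplying the Bézout identity by $h$ gives
\[
h=\omega_{m,0}\,(ph)+\omega_{0,n}\,(qh)\in H_{m,0}+H_{0,n}\subseteq H_{\mu,0}+H_{0,\nu}.
\]
This yields $H\subseteq H_{\mu,0}+H_{0,\nu}$, and hence equality.

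I do not expect a genuine obstacle here: once one notices that the statement is really about coprimeness of $(a-z)$ and $(b-z)$ in $\C[z]$, the argument is short. The only points needing (routine) care are the branch bookkeeping for the non-integer powers $\omega_{\mu,\nu}$ — harmless, since $\omega_{\mu,0}\omega_{m-\mu,0}=\omega_{m,0}$ holds for integer $m$ regardless of the chosen branch — and the standard fact that $H^{\infty}$, in particular $\C[z]$, multiplies the Hardy and weighted Bergman spaces boundedly. One could instead invoke Carleson's corona theorem, since $a\neq b$ forces $\inf_{z\in\DD}\bigl(|\omega_{\mu,0}(z)|+|\omega_{0,\nu}(z)|\bigr)>0$, but that is an unnecessarily heavy tool given the elementary polynomial argument.
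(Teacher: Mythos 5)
Your proof is correct and is essentially the paper's argument: both hinge on reducing to integer exponents $m\geq\mu$, $n\geq\nu$ and producing a polynomial identity $(a-z)^m p(z)+(b-z)^n q(z)=c$ whose terms are then multiplied into $f$. The only difference is that the paper constructs the Bézout pair explicitly by expanding $\bigl((a-z)-(b-z)\bigr)^{m+n}=(a-b)^{m+n}$ binomially and sorting the terms $\omega_{j,m+n-j}f$ into $H_{\mu,0}$ or $H_{0,\nu}$ according to whether $j\geq m$ or $j\leq m$, whereas you invoke coprimality in $\C[z]$ abstractly.
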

	\begin{proof}
		Take $m,n\in \NN_0$ such that $m\geq \mu$ and $n \geq \nu$. Let $f \in H$. Then
		\begin{align*}
			f(z) &= \frac{(a-z - (b-z))^{m+n}}{(a-b)^{m+n}} f(z) = \frac{1}{(a-b)^{m+n}} \sum_{j=0}^{m+n} {{m+n}\choose{j}}   (-1)^{m+n-j} \omega_{j,m+n-j}(z) f(z), \quad z \in \DD.
		\end{align*}
		It is clear that $\omega_{j,m+n-j} f$ belongs to $H_{0,\nu}$ if $j \leq m$, just as such a function belongs to $H_{\mu,0}$ if $j \geq m$. Thus, the proof is done.
	\end{proof}

\section{Spectral estimates}\label{section 3}

For $n \in \NN_0$, recall that $\psi_n = \underbrace{\psi \circ \ldots \circ \psi}_{n \text{ times}}$. We set $u_n(z) := \prod_{j=0}^{n-1} u \circ \psi_j$, so one gets
	\begin{equation*}
		(u C_\psi)^n = u_n C_{\psi_n}, \qquad n \in \NN_0.
	\end{equation*}

The following result is inspired by \cite[Lemma 4.4]{hyvarinen2013spectra}.
		
	\begin{lemma}\label{boundIterationWeight}
		One has
		\begin{align*}
			\lim_{n \to \infty} \left(\sup_{z\in \DD} |u_n(z)|\right)^{1/n} & \leq \max\{A^+, B^+\},
			\\ \lim_{n \to \infty} \left(\inf_{z\in \DD} |u_n(z)|\right)^{1/n} & \geq \min\{A^-, B^-\}.
		\end{align*}
	\end{lemma}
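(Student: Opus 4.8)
The plan is to combine the hyperbolic dynamics of $\psi$ with the cocycle identity for the weights $u_n$. From $(uC_\psi)^{n+m}=(uC_\psi)^n(uC_\psi)^m$ and $(uC_\psi)^n=u_nC_{\psi_n}$ one reads off $u_{n+m}(z)=u_n(z)\,u_m(\psi_n(z))$, hence $|u_{n+m}(z)|=|u_n(z)|\,|u_m(\psi_n(z))|$ for every $z\in\DD$. Setting $P_n:=\sup_{\DD}|u_n|$ and $Q_n:=\inf_{\DD}|u_n|$ we get $P_{n+m}\le P_nP_m$ and $Q_{n+m}\ge Q_nQ_m$, and since $0<\inf_\DD|u|\le|u|\le\|u\|_\infty<\infty$ we have $0<(\inf_\DD|u|)^n\le Q_n\le P_n\le\|u\|_\infty^n$; so Fekete's lemma already guarantees that both limits $\lim_nP_n^{1/n}=\inf_nP_n^{1/n}$ and $\lim_nQ_n^{1/n}=\sup_nQ_n^{1/n}$ exist (and are finite), and it remains only to estimate them.

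The heart of the matter is a uniform bound on how long an orbit can avoid the two fixed points: \emph{for any neighbourhoods $U_a\ni a$ and $U_b\ni b$ in $\overline\DD$ there is $N=N(U_a,U_b)\in\NN$ with $\#\{\,j\ge0:\psi_j(z)\notin U_a\cup U_b\,\}\le N$ for all $z\in\DD$.} To prove this I would pass to the standard normal form for hyperbolic automorphisms recalled in the Notation section: $\psi$ is conjugate, through a biholomorphism $T$ of $\DD$ onto the right half-plane $\mathbb{H}$ which extends to a homeomorphism of the respective closures in the Riemann sphere with $T(a)=\infty$ and $T(b)=0$, to the dilation $\zeta\mapsto\rho\zeta$ with $\rho=\psi'(b)>1$; thus $T(\psi_j(z))=\rho^jT(z)$. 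By continuity of $T^{-1}$ at $0$ and at $\infty$ there are $0<\delta<R<\infty$ with $\{z\in\DD:|T(z)|>R\}\subseteq U_a$ and $\{z\in\DD:|T(z)|<\delta\}\subseteq U_b$. If $z\in\DD$ and $\zeta:=T(z)$, then $\psi_j(z)\notin U_a\cup U_b$ forces $\delta\le|\rho^j\zeta|=\rho^j|\zeta|\le R$, i.e. $j$ lies in a real interval of length $\log_\rho(R/\delta)$, which contains at most $N:=\lfloor\log_\rho(R/\delta)\rfloor+1$ integers; crucially $N$ is independent of $z$.

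Granting this, fix $\epsilon>0$. For the first inequality, use the definition of $A^+,B^+$ to choose $U_a,U_b$ small enough that $|u|\le A^+ +\epsilon$ on $U_a\cap\DD$ and $|u|\le B^+ +\epsilon$ on $U_b\cap\DD$, and put $C:=\max\{A^+,B^+\}+\epsilon$, $M:=\|u\|_\infty$. For $z\in\DD$, among the $n$ factors $|u(\psi_j(z))|$ at most $N$ exceed $C$ (namely those with $\psi_j(z)\notin U_a\cup U_b$) and each of those is $\le M$, so $|u_n(z)|\le\max\{1,(M/C)^N\}\,C^n$; hence $P_n^{1/n}\le\max\{1,(M/C)^N\}^{1/n}\,C\to C$ and $\lim_nP_n^{1/n}\le\max\{A^+,B^+\}+\epsilon$, which yields the first estimate on letting $\epsilon\to0$. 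The second inequality is symmetric: choose $U_a,U_b$ with $|u|\ge A^- -\epsilon$ on $U_a\cap\DD$ and $|u|\ge B^- -\epsilon$ on $U_b\cap\DD$, note $c:=\min\{A^-,B^-\}-\epsilon>0$ as soon as $\epsilon<\inf_\DD|u|$ (since $A^-,B^-\ge\inf_\DD|u|>0$), bound the at most $N$ exceptional factors below by $\inf_\DD|u|$, and obtain $Q_n\ge\min\{1,(\inf_\DD|u|/c)^N\}\,c^n$, so $\lim_nQ_n^{1/n}\ge c$; let $\epsilon\to0$.

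I expect the uniform excursion bound of the second paragraph to be the only real obstacle, since it is the sole place where uniformity over $z\in\DD$ — rather than the mere locally uniform convergence $\psi_j(z)\to a$ — is required, and it is exactly what makes the $n$-th-root limits computable; the cocycle identity, Fekete's lemma and the splitting of the product into "good" factors and at most $N$ "bad" ones are routine bookkeeping.
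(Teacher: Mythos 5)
Your proof is correct and takes essentially the same route as the paper's: cover small neighbourhoods of $a$ and $b$ on which $|u|$ is within $\epsilon$ of the relevant limit, observe that each orbit $\{\psi_j(z)\}_{j\ge 0}$ spends a uniformly bounded number of steps outside these neighbourhoods, split the product $u_n$ into good and exceptional factors, and use submultiplicativity for the existence of the limits. The only difference is that you supply a proof (via the half-plane/dilation model) of the uniform excursion bound, which the paper simply asserts.
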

	\begin{proof}
		Let us prove the identity regarding the supremum. For each $\varepsilon >0$, take neighborhoods $U,V$ of $a,b$ (respectively) in $\DD$ such that
		$$|u(z)| \leq (1+\varepsilon) \max\{A^+, B^+\}, \qquad z \in U \cup V.
		$$
		Note that there exists $m \in \NN_0$ such that, for all $z \in \DD$, at most $m$ elements of $\{\psi_n(z) \mid n \in \NN_0\}$ belong to $\DD \setminus \{U \cup V\}$. Thus
		\begin{align*}
			\sup_{z\in \DD} |u_n(z)| & \leq \|u\|_\infty^m \left[(1+\varepsilon) \max\{A^+, B^+\}\right]^{n-m}.
		\end{align*}
		Hence $\lim_{n \to\infty} \left(\sup_{z \in \DD} |u_n(z)|\right)^{1/n} \leq \max\{A^+, B^+\}$, as claimed (previous limit exists because $n \mapsto \|u_n\|_{\infty}$ is a subexponential function, see for instance \cite[Theorem 7.6.5]{hille1957functional}).
		
		Regarding the infimum, the proof is analogous to the one of the supremum.
	\end{proof}
	
The proof of next result follows the ideas of the proof of \cite[Theorem 4.6]{hyvarinen2013spectra}.

	\begin{proposition}\label{spectralRadiusProp}
		One has
		$$\sigma(uC_{\psi}) \subseteq \left\{\lambda \in \CC \, : \,  \min\left\{\frac{A^-}{(\psi'(a))^{\gamma}}, \frac{B^-}{(\psi'(b))^{\gamma}}\right\} \leq |\lambda| \leq  \max\left\{\frac{A^+}{(\psi'(a))^{\gamma}}, \frac{B^+}{(\psi'(b))^{\gamma}}\right\}\right\}.
		$$
	\end{proposition}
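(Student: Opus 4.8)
The plan is to estimate the spectral radius of $uC_\psi$ (and of its inverse) by combining the iteration formula $(uC_\psi)^n = u_n C_{\psi_n}$ with the range-space machinery of Section~\ref{section 2}. For the outer bound, set $R := \max\{A^+/(\psi'(a))^\gamma, B^+/(\psi'(b))^\gamma\}$; I would show $r(uC_\psi)_{\mathcal L(H)} \le R$, and analogously $r((uC_\psi)^{-1})_{\mathcal L(H)} \le 1/\min\{A^-/(\psi'(a))^\gamma, B^-/(\psi'(b))^\gamma\}$, which together give the desired annulus by the spectral mapping theorem for the invertible operator $uC_\psi$. Since $uC_\psi$ is invertible and bounded, both quantities are finite and the two estimates are dual to one another, so it suffices to carry out one of them carefully and indicate that the other is analogous (using the $\liminf$ part of Lemma~\ref{boundIterationWeight} in place of the $\limsup$ part).

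The key step is to exploit the freedom in the parameters $\mu,\nu \ge 0$. By Lemma~\ref{isometricCopy}, for each choice of $\mu,\nu$ the operator $uC_\psi$ acting on $H$ is similar to $\frac{\omega_{\mu,\nu}\circ\psi}{\omega_{\mu,\nu}}\,uC_\psi$ acting on $H$ (the intertwiner is $M_{\omega_{\mu,\nu}}$, which is merely bounded and boundedly invertible as a map $H\to H$, hence a similarity on $H$ once we transport through it), so the spectrum of $uC_\psi$ equals that of $v^{(\mu,\nu)} C_\psi$ where $v^{(\mu,\nu)} := \frac{\omega_{\mu,\nu}\circ\psi}{\omega_{\mu,\nu}}\, u$. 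Now iterate: $(v^{(\mu,\nu)} C_\psi)^n = v^{(\mu,\nu)}_n C_{\psi_n}$ with $v^{(\mu,\nu)}_n = \frac{\omega_{\mu,\nu}\circ\psi_n}{\omega_{\mu,\nu}}\, u_n$. Using $\|C_{\psi_n}f\|_H = \|(\psi_n')^{-\gamma} C_{\psi_n,\gamma} f\|_H$ and the fact that $C_{\psi_n,\gamma}$ is an isometry on $H$, one gets $\|(v^{(\mu,\nu)}C_\psi)^n\|_{\mathcal L(H)} \le \big\| v^{(\mu,\nu)}_n \cdot (\psi_n')^{-\gamma}\big\|_\infty$. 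The point is that $\big|\frac{\omega_{\mu,\nu}\circ\psi_n}{\omega_{\mu,\nu}}\big| \cdot |\psi_n'|^{-\gamma}$ has, by \eqref{limitsEq} and $\psi_n'=\prod_{j=0}^{n-1}\psi'\circ\psi_j$ together with $\psi_n(z)\to a$, limiting behavior of order $(\psi'(a))^{(\mu-\gamma)n}$ near $a$ and $(\psi'(b))^{(\nu-\gamma)n}$ near $b$; choosing $\mu < \gamma$ and $\nu < \gamma$ makes the contribution near $a$ (where $\psi'(a)<1$) grow like a positive power of $\psi'(a)^{n}$... wait—more carefully, one wants $\mu,\nu$ so that the geometric decay/growth of $|\omega_{\mu,\nu}\circ\psi_n/\omega_{\mu,\nu}|$ exactly compensates $|\psi_n'|^\gamma$, i.e. takes $\mu=\nu=\gamma$ and then controls the remaining factor $u_n$ by Lemma~\ref{boundIterationWeight}. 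Let me restate: with $\mu=\nu=\gamma$, away from $a,b$ the factor $\frac{\omega_{\gamma,\gamma}\circ\psi_n}{\omega_{\gamma,\gamma}}\,(\psi_n')^{-\gamma}$ is bounded uniformly in $n$ (only finitely many iterates stay away from the fixed points), while near $a$ it tends to $(\psi'(a))^{\gamma n}(\psi'(a))^{-\gamma n} = 1$ after accounting for both $\omega$ and $\psi_n'$, and similarly near $b$; combined with $\|u_n\|_\infty^{1/n}\to \le \max\{A^+,B^+\}$ one would naively get $r(uC_\psi)\le \max\{A^+,B^+\}$, which is \emph{not} the claimed bound. So the correct bookkeeping must keep $\omega$ and $\psi_n'$ with \emph{different} exponents.

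The resolution, and the heart of the argument, is a two-region splitting: fix small neighborhoods $U\ni a$, $V\ni b$; on $U$ one estimates $\big|\frac{\omega_{\mu,\nu}\circ\psi_n}{\omega_{\mu,\nu}}\,u_n\,(\psi_n')^{-\gamma}\big| \lesssim (\psi'(a))^{\mu n}\,( (1+\varepsilon)A^+)^{\text{(most of }n)}\,(\psi'(a))^{-\gamma n}$ by tracking that almost all iterates of a point in $U$ stay in $U$; on $V$ one gets the analogous bound with $(\psi'(b))^{\nu n}((1+\varepsilon)B^+)^{\cdots}(\psi'(b))^{-\gamma n}$; and on the complement only boundedly many iterates live, contributing a fixed constant. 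Taking the $n$-th root and $n\to\infty$ yields $r(uC_\psi) \le \max\{A^+ (\psi'(a))^{\mu-\gamma},\, B^+ (\psi'(b))^{\nu-\gamma}\}$ for \emph{every} admissible pair $\mu,\nu\ge 0$ — but to land exactly on the exponents $-\gamma$ in the statement, I would simply take $\mu=\nu=0$ (i.e. use $u$ itself, no weight twist at all, so this part of the argument needs no Lemma~\ref{isometricCopy} — the $\omega$-trick is actually what one needs for the \emph{lower} considerations or for sharpening in later sections, and here only $\|C_{\psi_n,\gamma}\|=1$ plus $|\psi_n'|^{-\gamma}\to (\psi'(a))^{-\gamma n}$ near $a$, $(\psi'(b))^{-\gamma n}$ near $b$ is needed). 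Then the supremum estimate reads $\|(uC_\psi)^n\|^{1/n}\le \big\|u_n(\psi_n')^{-\gamma}\big\|_\infty^{1/n}$, and the region-splitting with Lemma~\ref{boundIterationWeight} gives $\limsup_n \le \max\{A^+(\psi'(a))^{-\gamma}, B^+(\psi'(b))^{-\gamma}\}$. For the inner radius, apply the same reasoning to $(uC_\psi)^{-1} = \overline{(1/u)}$-type operator — concretely $(uC_\psi)^{-1} = w\, C_{\psi^{-1}}$ with $w = 1/(u\circ\psi^{-1})$, whose fixed-point data at $a,b$ is governed by $1/B^-, 1/A^-$ and $\psi'(a),\psi'(b)$ swapped — yielding $r((uC_\psi)^{-1})^{-1} \ge \min\{A^-(\psi'(a))^{-\gamma}, B^-(\psi'(b))^{-\gamma}\}$, and the spectral mapping theorem closes the proof. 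The main obstacle is the careful combinatorics of how many iterates $\{\psi_j(z)\}_{j<n}$ fall outside $U\cup V$ (uniformly bounded, as in Lemma~\ref{boundIterationWeight}) and simultaneously pinning the $(\psi_n')^{-\gamma}$ factor to the right limit on each region; this is where one must be precise rather than hand-wavy.
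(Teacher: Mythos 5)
Your final argument is the paper's own: bound $\|(uC_\psi)^n\|_{\mathcal{L}(H)}$ by $\big\|\big(u/(\psi')^\gamma\big)_n\big\|_\infty$ using the isometry $C_{\psi_n,\gamma}$ together with the identity $(\psi_n)'=(\psi')_n$, apply Lemma~\ref{boundIterationWeight} to the weight $u/(\psi')^\gamma$ to get the outer radius, and obtain the inner radius by running the same estimate on $(uC_\psi)^{-1}=\frac{1}{u\circ\psi^{-1}}C_{\psi^{-1}}$ and invoking $\sigma((uC_\psi)^{-1})=(\sigma(uC_\psi))^{-1}$. The detour through $\omega_{\mu,\nu}$ and the explicit two-region bookkeeping are unnecessary for this proposition (the paper reserves Lemma~\ref{isometricCopy} for Proposition~\ref{inclusionSpec} and Theorem~\ref{Supra}), but the proof you ultimately settle on is correct and essentially identical to the paper's.
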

	\begin{proof}
		Since  $C_{\psi,\gamma}$ is an isometric isomorphism on $H,$
		\begin{align*}
			\|(u C_{\psi})^n f\|_H & = \|u_n C_{\psi_n} f\|_H   \leq \left\| \frac{u_n} {(\psi_n')^{\gamma}} \right\|_{\infty} \left\|(\psi_n')^{\gamma} C_{\psi_n} f\right\|_H
			= \left\| \left(\frac{u} {(\psi')^{\gamma}}\right)_n \right\|_{\infty} \|f\|_H, \quad f \in H, \, n \in \NN_0,
		\end{align*}
		where we have used $(\psi_n)' = (\psi')_n$, so $\displaystyle{\frac{u_n} {((\psi_n)')^{\gamma}}} = \left(\frac{u} {(\psi')^{\gamma}}\right)_n$.  Hence, an application of Lemma \ref{boundIterationWeight} to the function $u/(\psi')^{\gamma}$ (instead of the function $u$) yields
			\begin{align*}
				\lim_{n\to \infty} \left\|(u C_\psi)^n \right\|_{\mathcal{L}(H)}^{1/n} &\leq \max\left\{ \frac{A^+}{(\psi'(a))^{\gamma}}, \frac{B^+}{(\psi'(b))^{\gamma}}\right\},
			\end{align*}
		and the upper bound for the spectral radius follows by the spectral radius formula.
		
		Regarding the lower bound on $\{|\lambda|\, : \, \lambda \in \sigma(uC_\psi) \}$, recall that 
		$$\sigma((uC_\psi)^{-1}) = (\sigma (u C_\psi))^{-1} = \{1/\lambda \, : \,\lambda \in \sigma (u C_\psi)\}.
		$$
		Since $(uC_\psi)^{-1} = \frac{1}{u \circ \psi^{-1}} C_{\psi^{-1}}$, by what we have already proven
		\begin{align*}
			\lim_{n \to \infty} \|(u C_\psi)^{-n}\|_{\mathcal{L}(H)}^n \leq \max\left\{\frac{(\psi'(a))^{\gamma}}{A^-}, \frac{(\psi'(b))^{\gamma}}{B^-}\right\},
		\end{align*}
		and our claim follows. Note that we have used above that $\limsup_{\DD \ni z \to a} \left|\frac{1}{u \circ \psi^{-1}}\right| = (A^-)^{-1}$, $(\psi^{-1})'(a) = (\psi'(a))^{-1}$, and the analogous identities evaluated at $b$.
	\end{proof}

The other spectral inclusion that we need is the following one:

\begin{proposition}\label{inclusionSpec} Assume $\frac{B^+}{(\psi'(b))^{\gamma}}< \frac{A^-}{(\psi'(a))^{\gamma}}.$ Then $$\left\{\lambda \in \CC \, : \, \frac{B^+}{(\psi'(b))^{\gamma}} \leq   |\lambda| \leq  \frac{A^-}{(\psi'(a))^{\gamma}} \right\} \subseteq \sigma(uC_{\psi}) .$$

\end{proposition}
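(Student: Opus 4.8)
Since $\sigma(uC_{\psi})$ is closed, it suffices to prove that every $\lambda$ in the \emph{open} annulus $B^{+}/(\psi'(b))^{\gamma}<|\lambda|<A^{-}/(\psi'(a))^{\gamma}$ belongs to it, and for that it is enough to produce an eigenvector: a non‑zero $f\in H$ with $uC_{\psi}f=\lambda f$. Fix such a $\lambda$. A convenient reference object is the linearizer $\phi_{0}(z):=\dfrac{z-b}{z-a}$, which is holomorphic and zero‑free on $\D$, satisfies $\phi_{0}\circ\psi=\psi'(b)\,\phi_{0}$, so $\log\phi_{0}$ solves $C_{\psi}(\log\phi_{0})-\log\phi_{0}=\log\psi'(b)$ and grows linearly along $\psi$‑orbits; note $\phi_{0}(z)\asymp 1/(a-z)$ as $z\to a$ and $\phi_{0}(z)\asymp z-b$ as $z\to b$.

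The plan is to look for $f$ of the form $f=e^{h}$ with $h\in\mathcal{O}(\D)$. Since $u$ is zero‑free on the simply connected $\D$ it admits a holomorphic logarithm, and substituting $f=e^{h}$ into $uC_{\psi}f=\lambda f$ reduces, after taking logarithms, to the cohomological equation
\[
 h\circ\psi-h \;=\; \log\lambda-\log u .
\]
For a hyperbolic automorphism $\psi$ the operator $C_{\psi}-I$ is surjective on $\mathcal{O}(\D)$ — the quotient $\D/\langle\psi\rangle$ is an open (hence Stein) Riemann surface, so there is no cohomological obstruction; equivalently one writes the solution as a telescoping series along forward/backward orbits of $\psi$ — so a solution $h\in\mathcal{O}(\D)$ exists, and it is unique up to a $\psi$‑invariant holomorphic summand. (Equivalently one may write $f=\phi_{0}^{s}e^{\tilde h}$ with $s\in\C$ a free parameter absorbing a constant shift of the right‑hand side; this makes transparent the connection with the formal $C_{\psi}$‑eigenvectors $\phi_{0}^{s}$ of eigenvalue $(\psi'(b))^{s}$, and, via $\phi_{0}^{s}\omega_{\mu,\nu}=\omega_{\mu-s,\nu+s}$ up to a constant, with the range‑space transport of Lemma~\ref{isometricCopy} used for the companion surjectivity statement — but it is not needed for the existence of $h$.)

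The decisive point — and, I expect, the main obstacle — is to check that $f=e^{h}\in H$. Iterating $h\circ\psi-h=\log\lambda-\log u$ gives the exact identity $h(\psi_{n}(z))=h(z)+\sum_{k=0}^{n-1}\bigl(\log\lambda-\log u(\psi_{k}(z))\bigr)$, hence $|f(\psi_{n}(z))|=|\lambda|^{n}|f(z)|/|u_{n}(z)|$, and symmetrically with $\psi_{-n}$ toward $b$. Since $\psi_{n}(z)\to a$ at the geometric rate $\psi'(a)=1/\psi'(b)$ and $\psi_{-n}(z)\to b$ at rate $1/\psi'(b)$, one has $|a-\psi_{n}(z)|\asymp(\psi'(b))^{-n}$ and $|b-\psi_{-n}(z)|\asymp(\psi'(b))^{-n}$, uniformly for $z$ in a fundamental domain. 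Bounding $|u_{n}(z)|\geq c\,(A^{-}-\varepsilon)^{n}$ near $a$ and $|u_{n}(z)|\leq C\,(B^{+}+\varepsilon)^{n}$ near $b$ (only a uniformly bounded number of orbit points lie outside a small neighbourhood of $\{a,b\}$, in the spirit of Lemma~\ref{boundIterationWeight}) yields $|f(z)|\lesssim|a-z|^{-\beta}$ near $a$ and $|f(z)|\lesssim|b-z|^{-\beta'}$ near $b$, with $\beta=\dfrac{\log\!\bigl(|\lambda|/(A^{-}-\varepsilon)\bigr)}{\log\psi'(b)}$ and $\beta'=\dfrac{\log\!\bigl((B^{+}+\varepsilon)/|\lambda|\bigr)}{\log\psi'(b)}$. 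A power singularity $|a-z|^{-\beta}$ is square‑integrable against $d\theta$ (for $H^{2}$) or against $(1-|z|^{2})^{\sigma}dA$ (for $\mathcal{A}_{\sigma}^{2}$) exactly when $\beta<\gamma$, and likewise $\beta'<\gamma$; and $\beta<\gamma\iff|\lambda|<(A^{-}-\varepsilon)/(\psi'(a))^{\gamma}$, $\beta'<\gamma\iff|\lambda|>(B^{+}+\varepsilon)/(\psi'(b))^{\gamma}$. Letting $\varepsilon\to0$ this is precisely the hypothesis $B^{+}/(\psi'(b))^{\gamma}<|\lambda|<A^{-}/(\psi'(a))^{\gamma}$; the behaviour of $f$ away from $a,b$ is routine (finitely many images of the fundamental domain). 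The reason $A^{-}$ and $B^{+}$ appear, rather than $A^{+}$ and $B^{-}$, is exactly that $u$ need not extend continuously to $a,b$, so $|u|$ may oscillate between $A^{-}$ and $A^{+}$ (resp.\ between $B^{-}$ and $B^{+}$), and making the above estimates robust against that oscillation is the technical core of the proof.

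As a partial independent confirmation, one can also argue by spectral radii. Using that $C_{\psi,\gamma}=(\psi')^{\gamma}C_{\psi}$ is unitary on $H$ one gets $\|(uC_{\psi})^{n}\|_{\mathcal{L}(H)}=\|(u/(\psi')^{\gamma})_{n}\|_{\infty}$; evaluating this supremum along orbits trapped in a forward‑invariant neighbourhood of the attractive fixed point $a$ gives $r(uC_{\psi})_{\mathcal{L}(H)}\geq A^{-}/(\psi'(a))^{\gamma}$, and the same applied to $(uC_{\psi})^{-1}=\frac{1}{u\circ\psi^{-1}}C_{\psi^{-1}}$ (whose symbol has attractive fixed point $b$) gives $r\bigl((uC_{\psi})^{-1}\bigr)_{\mathcal{L}(H)}\geq(\psi'(b))^{\gamma}/B^{+}$. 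Hence $\sigma(uC_{\psi})$ reaches modulus $\geq A^{-}/(\psi'(a))^{\gamma}$ and modulus $\leq B^{+}/(\psi'(b))^{\gamma}$; combined with the rotational invariance of $\sigma(uC_{\psi})$ — conjugation by the bounded invertible multiplier $M_{\phi_{0}^{it}}$, $t\in\R$, sends $uC_{\psi}$ to $(\psi'(b))^{it}uC_{\psi}$ — this already pins down the two bounding circles, while the eigenvector construction above fills the interior.
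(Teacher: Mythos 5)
Your strategy is genuinely different from the paper's, and as written it has a gap at its central step. The paper never constructs eigenvectors for this proposition: it takes the explicit function $f=\omega_{\alpha-\gamma,\beta-\gamma}\in H_{\mu,0}\cap H_{0,\nu}$, transports the bounds of Proposition \ref{spectralRadiusProp} to the range spaces via Lemma \ref{isometricCopy} to control $\limsup_n\|(uC_\psi)^{\pm n}f\|_H^{1/n}$, builds the two local resolvent series $F_\lambda$ and $G_\lambda$, and derives a contradiction from analytic continuation plus the vector-valued Liouville theorem; the fact that interior points are eigenvalues is deduced only afterwards, from this inclusion combined with the surjectivity of Theorem \ref{Supra}. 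Your plan is instead to exhibit a zero-free eigenvector $f=e^h$ directly, which is essentially the route of \cite[Subsection 3.5]{gunatillake} and \cite[Remark 4.10]{hyvarinen2013spectra} --- and those arguments require extra regularity of $u$ (disc algebra, $u'$ bounded) that the present proposition does not assume.

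The gap is twofold. First, solving $h\circ\psi-h=\log\lambda-\log u$ in $\mathcal{O}(\DD)$ is not as cheap as you suggest: the telescoping series $-\sum_{k\geq 0}(\log\lambda-\log u)\circ\psi_k$ does not converge, since the summand tends at best to a nonzero constant along forward orbits, and because $u$ is not assumed continuous at $a$ it need not have a limit at all. More seriously, even granting existence, $h$ is determined only up to addition of an arbitrary $\psi$-invariant holomorphic function, i.e., the pullback of an arbitrary holomorphic function on the quotient annulus $\DD/\langle\psi\rangle$; this is an infinite-dimensional space containing unbounded functions. Your orbit identity $|f(\psi_n(z))|=|\lambda|^n|f(z)|/|u_n(z)|$ controls $f$ only relative to its values on a fundamental domain $K$, and $K$ is not relatively compact in $\DD$: its closure meets $\partial\DD$ along two arcs near which nothing in the construction controls $h$, hence $f$. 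The claim that ``the behaviour of $f$ away from $a,b$ is routine'' is therefore unjustified --- for a generic choice of $h$ the function $e^h$ is not in $H$, and selecting a solution for which it is constitutes exactly the hard analytic content you have not supplied (you yourself flag the robustness against oscillation of $|u|$ as ``the technical core'' without carrying it out). The fallback in your last paragraph does not rescue the statement: rotation invariance of $\sigma(uC_\psi)$ together with the two spectral-radius lower bounds shows only that the spectrum contains one circle of radius at least $A^-/(\psi'(a))^\gamma$ and one of radius at most $B^+/(\psi'(b))^\gamma$; since the spectrum of an invertible operator need not be connected, this does not fill the annulus, as you concede. To complete a proof you must either impose regularity on $u$ at the fixed points, or switch to a mechanism --- such as the paper's local-resolvent/Liouville argument --- that never requires exhibiting an eigenfunction.
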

\begin{proof}
Since $\psi'(a)\in (0,1)$ and $\psi'(b)>1,$ there are $\alpha,\beta>0$ such that $A^+(\psi'(a))^{\alpha-\gamma}<\frac{B^+}{(\psi'(b))^{\gamma}}$ and $\frac{A^-}{(\psi'(a))^{\gamma}}< B^-(\psi'(b))^{\beta-\gamma}.$ Set $f(z):=(a-z)^{\alpha-\gamma}(b-z)^{\beta-\gamma}.$ Note that $f\in H.$ Moreover,  $f\in H_{\mu,0},H_{0,\nu}$ for each $0\leq \mu<\alpha,$ $0\leq \nu<\beta.$ Fix now $0\leq \mu<\alpha$ and $0\leq \nu<\beta$ such that  $$A^+(\psi'(a))^{\mu-\gamma}=\frac{B^+}{(\psi'(b))^{\gamma}},\qquad \frac{A^-}{(\psi'(a))^{\gamma}}= B^-(\psi'(b))^{\nu-\gamma}.$$

Recall that the inclusion $H_{\mu,0}  \hookrightarrow H$ is continuous, so $\|(uC_{\psi})^nf\|_H\leq \|(uC_{\psi})^nf\|_{H_{\mu,0}},$ and therefore $$\limsup_n \|(uC_{\psi})^nf\|_H^{1/n}\leq \limsup_n \|(uC_{\psi})^n\|^{1/n}_{\mathcal{L}(H_{\mu,0})}=r(uC_{\psi})_{\mathcal{L}(H_{\mu,0})}.$$
By Lemma \ref{isometricCopy} the operator $uC_{\psi}$  acting on $H_{\mu,0}$ is isometrically isomorphic to the operator  $\frac{\omega_{\mu,0} \circ \psi}{\omega_{\mu,0}} u C_\psi$ acting on $H.$ In particular, their spectral radius are equal. If one observes that the weight $\frac{\omega_{\mu,0} \circ \psi}{\omega_{\mu,0}} u$ is bounded and bounded away from zero, by Proposition \ref{spectralRadiusProp} applied to $\frac{\omega_{\mu,0} \circ \psi}{\omega_{\mu,0}} u C_\psi$ follows that $$\limsup_n \|(uC_{\psi})^nf\|_H^{1/n}\leq \max\left\{  \frac{B^+}{(\psi'(b))^{\gamma}}, A^+(\psi'(a))^{\mu-\gamma}\right\}= \frac{B^+}{(\psi'(b))^{\gamma}}.$$

Similarly, the inclusion $H_{0,\nu}  \hookrightarrow H$ is continuous and $(uC_{\psi})^{-1}=\frac{1}{u\circ \psi^{-1}}C_{\psi^{-1}}$ acting on $H_{0,\nu}$ is isometrically isomorphic to  $\frac{\omega_{0,\nu} \circ \psi^{-1}}{\omega_{0,\nu}} \frac{1}{u\circ \psi^{-1}}C_{\psi^{-1}}$ acting on $H,$ with both weights $\frac{1}{u\circ \psi^{-1}}$ and $\frac{\omega_{0,\nu} \circ \psi^{-1}}{\omega_{0,\nu}} \frac{1}{u\circ \psi^{-1}}$ bounded and bounded away from zero. Hence, applying again Proposition \ref{spectralRadiusProp} we get $$\limsup_n \|(uC_{\psi})^{-n}f\|_H^{1/n}\leq \max\left\{  \frac{1/A^-}{((\psi^{-1})'(a))^{\gamma}}, \frac{((\psi^{-1})'(b))^{\nu-\gamma}}{B^-}\right\}= \frac{(\psi'(a)^{\gamma})}{A^-}.$$

For each $\lambda\in\C$ such that $|\lambda|> \frac{B^+}{(\psi'(b))^{\gamma}}$, consider the function $F_\lambda \in H$ given by the convergent series (on $H$) $$F_{\lambda}:=\sum_{n=0}^{\infty}\frac{(uC_{\psi})^n f}{\lambda^{n+1}},$$ and for $\lambda\in\C$ such that $|\lambda|< \frac{A^-}{(\psi'(a))^{\gamma}}$, consider the function $G_\lambda \in H$ given by the convergent series (on $H$) $$G_{\lambda}:=\sum_{n=0}^{\infty}\lambda^n (uC_{\psi})^{-(n+1)} f.$$ 
Indeed, note that these two series are Cauchy sequences on $H$ by the asymptotic bounds proven above for $(u C_{\psi})^n f$ and $(uC_{\psi})^{-n}f$ respectively. One can easily check that $$(\lambda -uC_{\psi})F_{\lambda}=f,\quad |\lambda|> \frac{B^+}{(\psi'(b))^{\gamma}},$$ and $$(\lambda -uC_{\psi})G_{\lambda}=f,\quad |\lambda|< \frac{A^-}{(\psi'(a))^{\gamma}}.$$

Returning to the claim of the proposition, we assume by contradiction that there is $\tilde{\lambda}\in\C$ with $\frac{B^+}{(\psi'(b))^{\gamma}}<|\tilde{\lambda}|<\frac{A^-}{(\psi'(a))^{\gamma}}$ such that $\tilde{\lambda}\in \rho(u C_{\psi})$. In such a case, since the resolvent set $\rho(u C_{\psi})$ is open, there is a small open ball $V$ inside of the annulus of radii $\frac{B^+}{(\psi'(b))^{\gamma}}$ and $\frac{A^-}{(\psi'(a))^{\gamma}}$ which also belongs to the resolvent set. Then, by above calculations, for each $\lambda\in V$ we have $$G_{\lambda}=(\lambda -uC_{\psi})^{-1}f=F_{\lambda}.$$ Since  $F_{(\cdot)}:\{\lambda\in\C\ :\  |\lambda|>\frac{B^+}{(\psi'(b))^{\gamma}}\}\to H$ and $G_{(\cdot)}:\{\lambda\in\C\ :\  |\lambda|<\frac{A^-}{(\psi'(a))^{\gamma}} \}\to H$ are vector-valued analytic functions, by the uniqueness of analytic continuation $F_{\lambda}=G_{\lambda}$ for all $\lambda$ in the annulus of radii $\frac{B^+}{(\psi'(b))^{\gamma}}$ and $\frac{A^-}{(\psi'(a))^{\gamma}}$ (see \cite[Corollary A.4]{ABHN}). 

Then one has that the vector-valued entire function (from $\C$ to $H$) given by \begin{equation}\label{function}
\lambda\mapsto \left\{\begin{array}{ll}
F_{\lambda},&|\lambda|>\frac{B^+}{(\psi'(b))^{\gamma}},\\
G_{\lambda},&|\lambda|<\frac{A^-}{(\psi'(a))^{\gamma}},
\end{array} \right.
\end{equation}
is also bounded. Indeed, take $\varepsilon>0$ small enough such that $\frac{B^+}{(\psi'(b))^{\gamma}}+\varepsilon<\frac{A^-}{(\psi'(a))^{\gamma}}-\varepsilon,$ and so $\|F_{\lambda}\|_{H}\lesssim \frac{1}{|\lambda|-\frac{B^+}{(\psi'(b))^{\gamma}}}$ is uniformly bounded for all $|\lambda|\geq \frac{B^+}{(\psi'(b))^{\gamma}}+\varepsilon$ (even more $\|F_{\lambda}\|_{H}\to 0$ as $|\lambda|\to \infty$) and $\|G_{\lambda}\|_{H}\lesssim \frac{1}{\frac{A^-}{(\psi'(a))^{\gamma}}-|\lambda|}$ is uniformly bounded for all $|\lambda|\leq  \frac{A^-}{(\psi'(a))^{\gamma}}-\varepsilon.$

By the vector-valued Liouville theorem (see for example \cite[Theorem 6]{Barletta}), one concludes that the entire function given by \eqref{function} is constant, and since 	$\|F_{\lambda}\|_{H}\to 0$ as $|\lambda|\to \infty,$ in particular is the zero function. This contradicts the identity $0\neq f=(\lambda -uC_{\psi})F_{\lambda}=(\lambda -uC_{\psi})G_{\lambda}$ for $\lambda$ in the annulus of radii $\frac{B^+}{(\psi'(b))^{\gamma}}$ and $\frac{A^-}{(\psi'(a))^{\gamma}},$ and we conclude $\widetilde \lambda \in \sigma(uC_{\psi})$ for all $\widetilde \lambda \in \CC$ with $\frac{B^+}{(\psi'(b))^{\gamma}}<|\tilde{\lambda}|<\frac{A^-}{(\psi'(a))^{\gamma}}$. Equivalently, $\left\{\lambda \in \CC \, : \, \frac{B^+}{(\psi'(b))^{\gamma}} <   |\lambda| < \frac{A^-}{(\psi'(a))^{\gamma}} \right\} \subseteq \sigma(uC_{\psi}),$ and the claim follows since $\sigma(uC_{\psi})$ is a closed subset of $\C.$
\end{proof}

\section{Hypotheses of Caradus theorem}\label{SupraInjec}

	In this section, we prove the hypotheses of Caradus' Theorem, which imply Theorem \ref{universal}.

	\begin{theorem}\label{Supra}
		Assume $\frac{B^+}{(\psi'(b))^\gamma} < \frac{A^-}{(\psi'(a))^\gamma}$. Then, for each $\lambda \in \CC$ with $\frac{B^+}{(\psi'(b))^\gamma} < |\lambda| < \frac{A^-}{(\psi'(a))^\gamma}$, the operator $\lambda I - u C_\psi$ is surjective on $H$.
	\end{theorem}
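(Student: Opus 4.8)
The plan is to solve $(\lambda I - u C_\psi)h = g$ for an arbitrary $g \in H$ by splitting $g$ along the decomposition $H = H_{\mu,0} + H_{0,\nu}$ of Lemma \ref{H2decomposition} and inverting $\lambda I - uC_\psi$ separately on each of the two range spaces $H_{\mu,0}$ and $H_{0,\nu}$, for a convenient choice of the exponents $\mu,\nu \geq 0$. The point is that on $H_{\mu,0}$ the operator $uC_\psi$ has small spectral radius, and on $H_{0,\nu}$ its inverse does, so that $\lambda$ avoids both spectra; the two pieces are then glued back together.

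Concretely, I would first fix $\mu \geq 0$ as follows. By Lemma \ref{isometricCopy} (with second index $0$), $uC_\psi$ acting on $H_{\mu,0}$ is isometrically equivalent to $\frac{\omega_{\mu,0}\circ\psi}{\omega_{\mu,0}} u C_\psi$ acting on $H$; this new weight is bounded and bounded away from zero, and by \eqref{limitsEq} its limsup at $a$ equals $A^+(\psi'(a))^\mu$ while its limsup at $b$ equals $B^+$. Proposition \ref{spectralRadiusProp} then gives $r(uC_\psi)_{\mathcal{L}(H_{\mu,0})} \leq \max\{A^+(\psi'(a))^{\mu-\gamma},\, B^+/(\psi'(b))^\gamma\}$. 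Since $\psi'(a) \in (0,1)$, taking $\mu$ large enough makes $A^+(\psi'(a))^{\mu-\gamma} < |\lambda|$, and $B^+/(\psi'(b))^\gamma < |\lambda|$ holds by hypothesis; hence $r(uC_\psi)_{\mathcal{L}(H_{\mu,0})} < |\lambda|$ and $\lambda I - uC_\psi$ is invertible on $H_{\mu,0}$. Symmetrically, I would fix $\nu \geq 0$ using that $(uC_\psi)^{-1} = \frac{1}{u\circ\psi^{-1}} C_{\psi^{-1}}$: by the same two results applied now to this operator on $H_{0,\nu}$ and to the hyperbolic automorphism $\psi^{-1}$ (whose attractive and repulsive fixed points are $b$ and $a$, with $(\psi^{-1})'(b) = 1/\psi'(b)$), one obtains $r((uC_\psi)^{-1})_{\mathcal{L}(H_{0,\nu})} \leq \max\{(\psi'(b))^{\gamma-\nu}/B^-,\, (\psi'(a))^\gamma/A^-\}$. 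As $\psi'(b) > 1$ and $(\psi'(a))^\gamma/A^- < 1/|\lambda|$ by hypothesis, a large enough $\nu$ makes this bound $< 1/|\lambda|$, so $\sigma(uC_\psi|_{H_{0,\nu}})$ lies in $\{|z| > |\lambda|\}$ and again $\lambda I - uC_\psi$ is invertible on $H_{0,\nu}$.

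With $\mu,\nu$ so chosen, the conclusion is routine: given $g \in H$, write $g = g_1 + g_2$ with $g_1 \in H_{\mu,0}$ and $g_2 \in H_{0,\nu}$ by Lemma \ref{H2decomposition}. Since each of $H_{\mu,0}$, $H_{0,\nu}$ is invariant for $uC_\psi$ and the induced operator is just the pointwise restriction of $uC_\psi$, and since the inclusions $H_{\mu,0}\hookrightarrow H$ and $H_{0,\nu}\hookrightarrow H$ are continuous, the vectors $h_1 := \big((\lambda I - uC_\psi)|_{H_{\mu,0}}\big)^{-1} g_1$ and $h_2 := \big((\lambda I - uC_\psi)|_{H_{0,\nu}}\big)^{-1} g_2$ lie in $H$ and satisfy $(\lambda I - uC_\psi)h_i = g_i$ in $H$. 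Then $h := h_1 + h_2$ solves $(\lambda I - uC_\psi)h = g$, which proves surjectivity. (Equivalently, $h_1$ and $h_2$ may be written as the $H$-convergent series $\sum_{n\geq 0}\lambda^{-n-1}(uC_\psi)^n g_1$ and $\sum_{n\geq 0}\lambda^{n}(uC_\psi)^{-n-1} g_2$, exactly as $F_\lambda$, $G_\lambda$ in the proof of Proposition \ref{inclusionSpec}.)

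The only delicate step is the exponent bookkeeping in the estimate on $H_{0,\nu}$: one must carefully compute the limsups at $a$ and $b$ of the weight $\frac{\omega_{0,\nu}\circ\psi^{-1}}{\omega_{0,\nu}}\cdot\frac{1}{u\circ\psi^{-1}}$ (using $\psi^{-1}(a)=a$, $\psi^{-1}(b)=b$, the analogue of \eqref{limitsEq} for $\psi^{-1}$, and $(\psi^{-1})'(b)=1/\psi'(b)$) and keep track of the fact that in Proposition \ref{spectralRadiusProp} applied to $\psi^{-1}$ the roles of the attractive and repulsive fixed points — hence of the quantities attached to $a$ and $b$ — are swapped. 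Beyond that, the argument is just an assembly of Lemmas \ref{isometricCopy} and \ref{H2decomposition} with Proposition \ref{spectralRadiusProp}, so I do not anticipate a genuine obstacle.
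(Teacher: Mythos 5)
Your proof is correct and follows essentially the same route as the paper: the same decomposition $H = H_{\mu,0} + H_{0,\nu}$ from Lemma \ref{H2decomposition}, the same use of Lemma \ref{isometricCopy} to transfer $uC_\psi$ to a weighted operator on $H$, and the same application of Proposition \ref{spectralRadiusProp} to place $\lambda$ outside the spectrum on each piece. The only cosmetic difference is that on $H_{0,\nu}$ you pass explicitly to $(uC_\psi)^{-1}$ and bound its spectral radius, whereas the paper invokes the lower-bound half of Proposition \ref{spectralRadiusProp} directly (which is itself proved by that very inversion), so the two arguments coincide; your exponent bookkeeping for the weight $\frac{\omega_{0,\nu}\circ\psi^{-1}}{\omega_{0,\nu}}\cdot\frac{1}{u\circ\psi^{-1}}$ checks out.
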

	\begin{proof}
		Let $\lambda \in \CC$ with $\frac{B^+}{(\psi'(b))^\gamma} < |\lambda| < \frac{A^-}{(\psi'(a))^\gamma}.$ Take $\mu,\nu > 0$ such that 
		\begin{equation*}
			|\lambda| >  A^+ (\psi'(a))^{\mu-\gamma}, \qquad |\lambda| < B^- (\psi'(b))^{\nu-\gamma}.
		\end{equation*}
		Note that such $\mu,\nu$ exist since $\psi'(a) \in (0,1)$ and $\psi'(b) \in (1,\infty)$. Also observe that by \eqref{limitsEq} one has
		$$\limsup_{\DD \ni z \to a } \left|\frac{\omega_{\mu,0}\circ \psi(z)}{\omega_{\mu,0}(z)} u(z)\right| =  (\psi'(a))^{\mu} A^+, \qquad	\limsup_{\DD \ni z \to b }  \left|\frac{\omega_{\mu,0}\circ \psi(z)}{\omega_{\mu,0}(z)} u(z)\right| =  B^+,
		$$
		and 
		$$\liminf_{\DD \ni z \to a} \left|\frac{\omega_{0,\nu}\circ \psi(z)}{\omega_{0,\nu}(z)} u(z)\right| =  A^-,
		\qquad
		\liminf_{\DD \ni z \to b} \left|\frac{\omega_{0,\nu}\circ \psi(z)}{\omega_{0,\nu}(z)} u(z)\right| =  (\psi'(b))^\nu B^-.
		$$
		Hence, Proposition \ref{spectralRadiusProp} yields $\displaystyle{\lambda \notin \sigma\left(\frac{\omega_{\mu,0}\circ \psi}{\omega_{\mu,0}} u C_\psi\right)}$ and $\displaystyle{\lambda \notin \sigma\left(\frac{\omega_{0,\nu}\circ \psi}{\omega_{0,\nu}} u C_\psi\right)}$, regarding both operators as operators in $H$. In particular, the operators $\lambda-\frac{\omega_{\mu,0}\circ \psi}{\omega_{\mu,0}} u C_\psi$ and $ \lambda -\frac{\omega_{0,\nu}\circ \psi}{\omega_{0,\nu}} u C_\psi$ are surjective on $H$.
		
		Now, by Lemma \ref{isometricCopy}, the operator $u C_\psi$ is a bounded invertible operator on $H_{\mu,0}$ (and also on $H_{0,\nu}$), which is isometrically equivalent to the operator $\frac{\omega_{\mu,0}\circ \psi}{\omega_{\mu,0}} u C_\psi$ ($\frac{\omega_{0,\nu}\circ \psi}{\omega_{0,\nu}} u C_\psi$ respectively) acting on $H$. By the above, we conclude that $\lambda-uC_\psi$ is surjective as an operator both on $H_{\mu,0}$ and on $H_{0,\nu}$. As $H = H_{\mu,0} + H_{0,\nu}$ by Lemma \ref{H2decomposition} (with $H_{\mu,0}, H_{0,\nu} \subseteq H$), our claim follows and the proof is done.
		
\end{proof}

\begin{remark} Note that by Proposition \ref{inclusionSpec} and Theorem \ref{Supra}, if $\frac{B^+}{\psi'(b)^\gamma} < \frac{A^-}{\psi'(a)^\gamma},$ each $\lambda \in \CC$ with $\frac{B^+}{\psi'(b)^\gamma} < |\lambda| < \frac{A^-}{\psi'(a)^\gamma}$ belongs to the point spectrum of $uC_{\psi}.$

\end{remark}

\begin{theorem}\label{Injec}
Assume $\frac{B^+}{(\psi'(b))^\gamma} < \frac{A^-}{(\psi'(a))^\gamma}$. Then  $\ker (\lambda I- uC_{\psi})$ has infinite dimension for each $\lambda \in \CC$ with $\frac{B^+}{(\psi'(b))^\gamma} < |\lambda| < \frac{A^-}{ 	(\psi'(a))^\gamma}.$ 
\end{theorem}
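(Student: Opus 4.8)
The plan is to show that $\ker(\lambda I - uC_\psi)$ is infinite-dimensional by exhibiting infinitely many linearly independent eigenvectors. The key point, already isolated in the Remark preceding this statement, is that each such $\lambda$ belongs to the \emph{point} spectrum of $uC_\psi$; in fact the surjectivity argument in Theorem~\ref{Supra} produces, for each fixed $\lambda$ in the annulus, \emph{two} explicit (generalized) resolvent-type series constructions on the range spaces $H_{\mu,0}$ and $H_{0,\nu}$. The idea is to exploit the freedom in the parameters $\mu,\nu$: for a fixed $\lambda$ with $\frac{B^+}{(\psi'(b))^\gamma} < |\lambda| < \frac{A^-}{(\psi'(a))^\gamma}$, there is an open range of admissible exponents, and varying them should yield eigenvectors living in different range spaces, hence linearly independent.

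First I would fix $\lambda$ in the open annulus and recall from the proof of Theorem~\ref{Supra} and Proposition~\ref{inclusionSpec} that one can choose $\mu,\nu>0$ (indeed an interval of them) with $|\lambda| > A^+(\psi'(a))^{\mu-\gamma}$ and $|\lambda| < B^-(\psi'(b))^{\nu-\gamma}$, so that $\lambda \notin \sigma\bigl(\frac{\omega_{\mu,0}\circ\psi}{\omega_{\mu,0}}uC_\psi\bigr)$ and $\lambda\notin\sigma\bigl(\frac{\omega_{0,\nu}\circ\psi}{\omega_{0,\nu}}uC_\psi\bigr)$ on $H$. Via Lemma~\ref{isometricCopy}, $\lambda - uC_\psi$ is then \emph{invertible} on each of $H_{\mu,0}$ and $H_{0,\nu}$. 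Now take a starting function $f = \omega_{\alpha-\gamma,\beta-\gamma}$ as in Proposition~\ref{inclusionSpec} lying in $H\cap H_{\mu,0}\cap H_{0,\nu}$; the functions $F_\lambda := \sum_{n\geq 0}\lambda^{-n-1}(uC_\psi)^n f$ and $G_\lambda := \sum_{n\geq 0}\lambda^n (uC_\psi)^{-n-1}f$ converge in $H$ (by the same asymptotic estimates as in that proof) and satisfy $(\lambda - uC_\psi)F_\lambda = f = (\lambda - uC_\psi)G_\lambda$. Hence $h_\lambda := F_\lambda - G_\lambda$ lies in $\ker(\lambda I - uC_\psi)$, and it is nonzero: if $F_\lambda = G_\lambda$ then, as in the Liouville argument of Proposition~\ref{inclusionSpec}, the two series would glue to a bounded entire $H$-valued function vanishing at infinity, hence $\equiv 0$, contradicting $(\lambda - uC_\psi)F_\lambda = f \neq 0$. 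So $\ker(\lambda I - uC_\psi)$ is at least one-dimensional — this is just the Remark re-proved.

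To upgrade to infinite dimension, the plan is to let $\lambda$ itself vary: choose countably many distinct points $\lambda_1,\lambda_2,\dots$ inside the open annulus $\frac{B^+}{(\psi'(b))^\gamma}<|\lambda|<\frac{A^-}{(\psi'(a))^\gamma}$. By the above each $\lambda_k$ is an eigenvalue of $uC_\psi$ with eigenvector $h_k := h_{\lambda_k} \neq 0$. Eigenvectors associated to \emph{distinct} eigenvalues of a bounded linear operator are automatically linearly independent, so $\{h_k\}_{k\geq 1}$ is an infinite linearly independent set in $H$, which forces $\dim \bigcup_k \ker(\lambda_k I - uC_\psi) = \infty$. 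But this is not yet what is claimed: Theorem~\ref{Injec} asserts that a \emph{single} kernel $\ker(\lambda I - uC_\psi)$ is infinite-dimensional. The honest route, then, is either (a) to produce, for one fixed $\lambda$, infinitely many independent eigenvectors directly — for instance by multiplying a fixed eigenvector by powers $B^n$ of a suitable Blaschke-type or bounded-away-from-zero multiplier that intertwines appropriately with $uC_\psi$ (the device used by Nordgren–Rosenthal–Wintrobe in the unweighted case), checking that such products stay in $H$ and remain eigenvectors; or (b) to observe that $uC_\psi$ restricted to a reducing/invariant subspace carries a whole subarc of eigenvalues and use an interpolation/basis argument.

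The main obstacle is exactly step (b)→(a): transferring ``infinitely many eigenvalues'' into ``an infinite-dimensional eigenspace for one $\lambda$''. In the unweighted case this is handled by the algebra structure — $C_\psi(g\cdot f) = (g\circ\psi)\cdot C_\psi f$, so if $f$ is an eigenvector and $g$ is $\psi$-invariant (e.g.\ a Blaschke product fixed by $\psi$) then $g^n f$ are eigenvectors for the same eigenvalue and are independent. For the weighted operator $uC_\psi$ the same identity holds, $uC_\psi(g f) = (g\circ\psi)\,uC_\psi f$, so if one can find a bounded multiplier $g$ of $H$ with $g\circ\psi = g$ (equivalently, $g$ constant on $\psi$-orbits) that is nonconstant — such $g$ exist, built from the linearizing coordinate of $\psi$, e.g.\ $g = B\circ\tau$ where $\tau$ conjugates $\psi$ to a dilation and $B$ is chosen so $B\circ(\text{dilation}) = B$ is impossible for nonconstant $B$, so instead one uses automorphy under the discrete group $\langle\psi\rangle$, i.e.\ a nonconstant bounded $H$-multiplier $g$ with $g\circ\psi = g$, realized as an infinite Blaschke product on the orbit $\{\psi_n(z_0)\}_{n\in\ZZ}$ — then $\{g^n h_\lambda\}_{n\geq 0}$ is the desired infinite independent family in $\ker(\lambda I - uC_\psi)$, provided $g$ is bounded away from... no: $g$ need only be a bounded multiplier, and powers of a bounded $H$-multiplier are bounded $H$-multipliers, so $g^n h_\lambda \in H$ for all $n$. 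Linear independence follows because $g$ is nonconstant (a nontrivial polynomial relation $\sum_{k} c_k g^k h_\lambda = 0$ would force, after dividing by $h_\lambda$ in $\mathcal O(\DD)$, the polynomial $\sum_k c_k w^k$ to vanish on the infinite set $g(\DD)$). I would therefore structure the proof as: (i) fix $\lambda$ and construct one eigenvector $h_\lambda \neq 0$ as above; (ii) construct a nonconstant bounded $H$-multiplier $g$ invariant under $\psi$ (Blaschke product on a $\psi$-orbit, exactly as in \cite{NRW}); (iii) verify $g^n h_\lambda \in \ker(\lambda I - uC_\psi)$ and that these are linearly independent, concluding $\dim\ker(\lambda I - uC_\psi) = \infty$.
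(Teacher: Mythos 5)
Your overall architecture is exactly the one the paper uses: take one nonzero eigenvector $h_\lambda$ (available from the Remark preceding the statement, since $\lambda I - uC_\psi$ is surjective and $\lambda\in\sigma(uC_\psi)$, hence cannot be injective), multiply it by a family of nonconstant bounded holomorphic functions $g$ with $g\circ\psi=g$, so that $uC_\psi(g h_\lambda)=(g\circ\psi)\,uC_\psi h_\lambda=\lambda\, g h_\lambda$, and deduce linear independence from nonconstancy. Your independence argument (a relation $\sum_k c_k g^k h_\lambda=0$ forces the polynomial $\sum_k c_k w^k$ to vanish on the infinite set $g(\DD)$) is valid and is a perfectly good substitute for the paper's device of viewing its invariant functions as eigenfunctions with distinct eigenvalues of $h\mapsto\omega_{1,1}h'$.

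The genuine gap is in your step (ii), the construction of $g$. A Blaschke product $B$ whose zero set is a full $\psi$-orbit $\{\psi_n(z_0)\}_{n\in\ZZ}$ satisfies only $B\circ\psi=cB$ for some unimodular constant $c$: the two inner functions share their zero set, so they agree up to a unimodular factor, and nothing forces $c=1$. With $c\neq 1$ one gets $uC_\psi(B^n h_\lambda)=c^n\lambda\,B^n h_\lambda$, i.e.\ eigenvectors for the \emph{moving} eigenvalues $c^n\lambda$ --- precisely the ``distinct eigenvalues'' trap you yourself flagged in the middle of your write-up. The paper avoids this by writing down exactly invariant functions: with $\delta:=-\log\psi'(a)$, the functions $g_k(z):=\bigl(\frac{b-z}{a-z}\bigr)^{2\pi k i/\delta}$, $k\in\ZZ$, are bounded holomorphic on $\DD$ (the base maps $\DD$ into a region of bounded argument and the exponent is purely imaginary) and satisfy $C_\psi g_k=g_k$ on the nose, because $C_\psi\bigl(\frac{b-z}{a-z}\bigr)^{w}=e^{\delta w}\bigl(\frac{b-z}{a-z}\bigr)^{w}$ and $e^{\delta\cdot 2\pi ki/\delta}=1$; then $\{g_k h_\lambda\}_{k\in\ZZ}$ does the job. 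A second, smaller defect: your claim that $h_\lambda:=F_\lambda-G_\lambda\neq 0$ does not follow from the Liouville argument, which requires $F_\lambda=G_\lambda$ on an open set of $\lambda$'s rather than at a single point; this is harmless, though, since the Remark already guarantees a nonzero element of $\ker(\lambda I-uC_\psi)$, which is all you need to start from.
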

\begin{proof}
Let $\lambda \in \CC$ with $\frac{B^+}{(\psi'(b))^\gamma} < |\lambda| < \frac{A^-}{(\psi'(a))^\gamma}.$ By the above remark, there is a non-zero function $f\in H$ such that $\lambda f=uC_{\psi} f.$ Let $\delta:=-\log \psi'(a) \in (0,\infty),$ and set $$g_k(z):=\biggl( \frac{b-z}{a-z} \biggr)^{\frac{2\pi k i }{\delta}},\quad z\in\D,\, k\in\ZZ.$$ Note that $C_{\psi} g_k(z)=g_k(z)$ for $z\in\D,\,k\in\ZZ$. Indeed, it is readily seen from \eqref{automorphismEq} that $C_{\psi }\biggl( \frac{b-z}{a-z}\biggr)^{w}=e^{\delta w}\biggl( \frac{b-z}{a-z} \biggr)^{w}$ for $w\in\C$.

Note that each $g_k$ is a bounded holomorphic function on $\D,$ therefore $f_k:=g_k f\in H$ for all $k\in \ZZ.$ Moreover, $$uC_{\psi} f_k= (uC_\psi f) \cdot (C_\psi g_k) = \lambda f g_k = \lambda f_k, \qquad k \in \ZZ.$$
In consequence, $f_k\in \ker (\lambda I- uC_{\psi})$ for each $k\in\ZZ,$ being $f_k$ a non zero-function.

Finally, observe that $\{f_k\}_{k\in\ZZ}$ is a linearly independent family of holomorphic functions if and only $\{g_k\}_{k\in\ZZ}$ are also linearly independent. The last one assertion follows since the set of functions $\{g_k\}_{k\in\ZZ}$ are eigenfunctions with distinct eigenvalues of the linear operator $T,$ where $(Th)(z)=\omega_{1,1}(z)h'(z).$ In particular $Tg_k=\frac{2\pi k(b-a) i}{\delta}g_k.$ 

In conclusion the linear space generated by $\{f_k\}_{k\in\ZZ}$ is infinite-dimensional and then, since $f_k\in \ker (\lambda I- uC_{\psi})$ for each $k\in\ZZ,$ the result follows.

\end{proof}

 \subsection*{A remark on non-hyperbolic weighted composition operators}
	It is natural to ask whether a result of the kind of Theorem \ref{universal} may be stated for invertible weighted composition operators $uC_\psi$ acting on $H$ if $\psi$ is not a hyperbolic automorphism. 
	
In such a case, recall that $\psi$ has to be either a parabolic automorphism, fixing then only one point which lies on $\T$, or either $\psi$ is an elliptic automorphism, fixing a point in $\D$ and another in $\C_\infty\setminus \overline{\D},$ where $\C_\infty := \C\cup\{\infty\}$ denotes the Riemann sphere. 
	
Now, by \cite[Theorem 2.2]{ST}, if $uC_\psi - \lambda I$ is universal for some $\lambda \in \C$, then the point spectrum of $uC_\psi$ has non-empty interior. But if $u$ is in the disc algebra, by \cite[Theorems 4.3, 4.11 and 4.13]{hyvarinen2013spectra} the spectrum of $uC_\psi$ has empty interior if $\psi$ is a parabolic or an elliptic automorphism, so $uC_\psi-\lambda I$ cannot be universal for any $\lambda \in \C.$
	
It would be interesting to characterize whether the spectrum of $uC_\psi$ has non-empty interior for symbols $u:\D\rightarrow \C$ not lying in the disc algebra, which would then open the door to study the universality for the non-hyperbolic cases.

\section{Banach space considerations}\label{BanachSpaceSection}
The concept of universal operator that we have treated throughout the text has been defined only for linear operators acting boundedly on Hilbert spaces. However, one may consider the same concept for operators acting on Banach spaces, with an equivalent definition to the one given in \eqref{definicion universal}.
	
The main obstacle to work with universality on Banach spaces is that not every infinite-dimensional closed invariant subspace of an operator $T \in \EL(\X)$ is isomorphic to $\X$, as it happens in the Hilbert space setting.   
	
Nevertheless, Caradus' Theorem may be re-written to obtain a sufficient condition for a linear bounded operator acting on a Banach space $\X$ to be universal \cite{Caradus}:
	
	\begin{theorem}\label{caradus Banach}Let $\X$ be a infinite-dimensional separable complex Banach space and $T\in \EL(\X)$, and assume that the following conditions are satisfied:
		\begin{enumerate}
			\item [(i)] $\ker T$ is a complemented subspace that contains a subspace which is isomorphic to $\X.$
			\item [(ii)] $T$ is surjective.
			\end{enumerate}
		Then, $T$ is universal.
	\end{theorem}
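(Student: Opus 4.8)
The plan is to adapt the classical proof of Caradus' Theorem, keeping track of where the two clauses of hypothesis (i) are used. First, I would use (ii) together with the complementation in (i) to produce a bounded right inverse of $T$: writing $\X = \ker T \oplus Y$ with $Y$ closed and $P \in \EL(\X)$ the bounded projection onto $\ker T$ along $Y$, the restriction $T|_Y : Y \to \X$ is a continuous bijection, hence an isomorphism by the open mapping theorem, so $R := (T|_Y)^{-1} \in \EL(\X)$ satisfies $TR = I_\X$; moreover $R(\X) = Y$, hence $R^n(\X) \subseteq Y = \ker P$ for every $n \geq 1$. Using the other clause of (i), I would also fix $J \in \EL(\X)$ that is an isomorphism of $\X$ onto a closed subspace of $\ker T$, so that $TJ = 0$ and $\|Jx\| \geq c\|x\|$ for some $c > 0$ and all $x \in \X$.

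Next, given an arbitrary $S \in \EL(\X)$, I would pick $\lambda \in \CC$ with $|\lambda| > \|R\|\,\|S\|$ and set
\[
\Phi := \sum_{n=0}^{\infty} \lambda^{-n}\, R^n J S^n ,
\]
which converges in operator norm since $\|\lambda^{-n} R^n J S^n\| \leq \|J\|\,(\|R\|\,\|S\|/|\lambda|)^n$. Using $TR^n = R^{n-1}$ for $n \geq 1$ and $TJ = 0$, a reindexing gives $T\Phi = \sum_{n \geq 1}\lambda^{-n} R^{n-1} J S^n = \lambda^{-1}\Phi S$, i.e. $\Phi S = \lambda\, T\Phi$. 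Hence $M := \Phi(\X)$ satisfies $T(M) = \lambda^{-1}\Phi(S(\X)) \subseteq M$, so $M \in \Lat T$, and the identity restricts to $\Phi S = \lambda\, T|_M \Phi$, which is exactly \eqref{definicion universal} with $U = T$ and the given operator $S$.

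Finally, I must check that $\Phi : \X \to M$ is an isomorphism, which is the only non-formal step. Applying $P$ termwise and using $R^n(\X) \subseteq \ker P$ for $n \geq 1$, all terms but the first vanish, so $P\Phi = J$; therefore $\|\Phi x\| \geq \|P\|^{-1}\|Jx\| \geq (c/\|P\|)\|x\|$, so $\Phi$ is bounded below. Thus $\Phi$ is injective with closed range $M$, hence a Banach-space isomorphism of $\X$ onto $M$, and $T$ is universal. I expect the bounded-below estimate to be the only delicate point: separating the leading term $J$ from the tail $\sum_{n\geq 1}$ requires a closed complement of $\ker T$, and indeed a surjection of $\X$ onto itself with non-complemented kernel need not admit any bounded right inverse, so both parts of hypothesis (i) are genuinely needed.
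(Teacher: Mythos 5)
Your proof is correct and is precisely the classical Caradus argument; the paper itself states this theorem without proof, deferring to Caradus's original article. All the delicate points are handled properly: the bounded right inverse $R$ built from surjectivity together with the closed complement $Y$ of $\ker T$, the identity $P\Phi = J$ (which isolates the $n=0$ term because $R^n(\X)\subseteq Y=\ker P$ for $n\geq 1$ and the image of $J$ is automatically closed, being isomorphic to the Banach space $\X$), and the resulting lower bound $\|\Phi x\|\geq (c/\|P\|)\|x\|$ that makes $M=\Phi(\X)$ a closed invariant subspace and $\Phi$ an isomorphism onto it.
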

This result may be used to show, for instance, that the backward shift acting on $\ell_1(\ell_1)$ is a universal operator.

Following this line of ideas, one may consider the Banach space version of the Hardy spaces and the weighted Bergman spaces. Namely, the Hardy space $H^p$ ($1\leq p < \infty$) on $\D$ is the space of all functions $f \in \mathcal{O}(\D)$ such that $$\|f\|_{H^p} := \sup_{0<r<1} \left(\int_0^{2\pi} |f(re^{i\theta})|^p \,\frac{d\theta}{2\pi} \right)^{1/p} < \infty,
$$
endowed with the norm $\Vert\cdot\Vert_{H^p}$.
	
	\medskip

Equivalently, for any $\sigma>-1,$ the weighted Bergman space $\mathcal{A}_\sigma^p$ ($1\leq p < \infty$) is formed by all holomorphic functions in $\D$ such that  
$$
\|f\|_{\mathcal{A}_\sigma^p} := \left(\int_{\D} |f(z))|^p (1-|z|^2)^\sigma dA(z)\right)^{1/p} < \infty,
$$ 
where $dA$ is the Lebesgue measure on $\D$.
	
	\medskip
	
Let us denote indistinctly by $\mathcal{B}$ the Hardy spaces $H^p$ and the weighted Bergman spaces $\mathcal{A}^p_\sigma.$ For these spaces, let $\gamma := 1/p$ and $\gamma:= (\sigma+2)/p$ respectively, so $(\phi')^\gamma C_{\phi}$ is an isometric isomorphism on $\mathcal B$, see \cite{forelli1964isometries, kolaski}. Then, one may repeat the arguments exposed throughout the paper to deduce the following result:
	
	\begin{theorem}\label{teorema Banach}
Let $uC_\psi$ be an invertible weighted composition operator with hyperbolic symbol $\psi$ acting on $\mathcal{B}$ such that $\frac{B^+}{(\psi'(b))^{\gamma}}< \frac{A^-}{(\psi'(a))^{\gamma}}.$ Then, for every $\frac{B^+}{(\psi'(b))^{\gamma}}< |\lambda|< \frac{A^-}{(\psi'(a))^{\gamma}},$ $\ker(uC_\psi-\lambda I)$ is infinite dimensional and $uC_\psi-\lambda I$ is surjective.
	\end{theorem}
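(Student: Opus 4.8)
The plan is to show that every ingredient used in Sections \ref{section 2}--\ref{SupraInjec} survives the passage from the Hilbert spaces $H$ to the Banach spaces $\mathcal{B} = H^p, \mathcal{A}^p_\sigma$, and then to invoke the Banach-space Caradus Theorem \ref{caradus Banach}. First I would record the structural facts about $\mathcal B$ that were used: $\mathcal B$ is an infinite-dimensional separable Banach space continuously embedded in $\mathcal O(\DD)$, the multipliers $\omega_{\mu,\nu}$ of Section \ref{section 2} are still bounded multipliers of $\mathcal B$ (they are bounded holomorphic, hence multiply $H^p$ and $\mathcal A^p_\sigma$ boundedly), and, crucially, $(\phi')^\gamma C_\phi$ is a surjective isometry of $\mathcal B$ for every automorphism $\phi$, with $\gamma = 1/p$ for $H^p$ and $\gamma = (\sigma+2)/p$ for $\mathcal A^p_\sigma$, by \cite{forelli1964isometries, kolaski}. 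The equivalence in item (3) of Subsection \ref{NotationSubSect} --- that $u$ bounded and bounded away from zero characterizes boundedness and invertibility of $uC_\psi$ --- also holds on $\mathcal B$ by \cite{gunatillake, hyvarinen2013spectra}.

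Next I would re-derive the spectral estimates. Lemma \ref{boundIterationWeight} is purely about the sup/inf of the scalar functions $u_n$, so it is unchanged. Proposition \ref{spectralRadiusProp} only uses that $C_{\psi,\gamma}$ is an isometric isomorphism and the spectral radius formula, so it transfers verbatim to $\mathcal B$ with the new $\gamma$. The range spaces $\mathcal B_{\mu,\nu} := \omega_{\mu,\nu}\mathcal B$ are defined exactly as $H_{\mu,\nu}$, with norm $\|f\|_{\mathcal B_{\mu,\nu}} := \|f/\omega_{\mu,\nu}\|_{\mathcal B}$; Lemma \ref{isometricCopy} (isometric equivalence of $uC_\psi$ on $\mathcal B_{\mu,\nu}$ with $\frac{\omega_{\mu,\nu}\circ\psi}{\omega_{\mu,\nu}}uC_\psi$ on $\mathcal B$) and the decomposition $\mathcal B = \mathcal B_{\mu,0} + \mathcal B_{0,\nu}$ of Lemma \ref{H2decomposition} are algebraic identities whose proofs use only the multiplier property and the binomial expansion, so both carry over. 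Then Proposition \ref{inclusionSpec} goes through unchanged: the Neumann-type series $F_\lambda, G_\lambda$ converge in $\mathcal B$ by the same asymptotic bounds, and the vector-valued analytic continuation and Liouville arguments (\cite{ABHN, Barletta}) are valid in any Banach space. From here the proof of Theorem \ref{Supra} --- surjectivity of $\lambda - uC_\psi$ on $\mathcal B$ obtained by choosing $\mu,\nu$ so that $\lambda \notin \sigma(\frac{\omega_{\mu,0}\circ\psi}{\omega_{\mu,0}}uC_\psi)$ and $\lambda\notin\sigma(\frac{\omega_{0,\nu}\circ\psi}{\omega_{0,\nu}}uC_\psi)$ and splitting via $\mathcal B = \mathcal B_{\mu,0}+\mathcal B_{0,\nu}$ --- is identical.

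For the kernel, the argument of Theorem \ref{Injec} produces, from one nonzero eigenvector $f\in \mathcal B$ with $uC_\psi f = \lambda f$ (which exists by the remark following Theorem \ref{Supra}, now valid on $\mathcal B$), the family $f_k = g_k f$ with $g_k(z) = ((b-z)/(a-z))^{2\pi k i/\delta}$, $\delta = -\log\psi'(a)$; each $g_k$ is bounded holomorphic, hence $f_k \in \mathcal B$ and $uC_\psi f_k = \lambda f_k$, and the $g_k$ are linearly independent (distinct eigenvalues of $h\mapsto \omega_{1,1}h'$). So $\ker(uC_\psi - \lambda I)$ is infinite-dimensional in $\mathcal B$ as well. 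This gives both conclusions of Theorem \ref{teorema Banach} directly. I would then add a short paragraph noting that, combined with the Banach-space Caradus Theorem \ref{caradus Banach}, universality follows whenever one additionally knows that $\ker(uC_\psi - \lambda I)$ is complemented and contains an isomorphic copy of $\mathcal B$; verifying that extra hypothesis --- not needed for the stated Theorem \ref{teorema Banach}, which asserts only the two Caradus premises in their Hilbert form --- is the genuinely delicate point in the Banach setting, since closed subspaces of $H^p$ or $\mathcal A^p_\sigma$ with $p \neq 2$ need be neither complemented nor isomorphic to the whole space. The main obstacle for Theorem \ref{teorema Banach} itself, however, is merely bookkeeping: checking that each cited isometry/spectrum result has the stated $H^p$- and $\mathcal A^p_\sigma$-version with the correct exponent $\gamma$, after which every step above is a transcription of the Hilbert-space argument.
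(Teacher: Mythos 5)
Your proposal is correct and follows exactly the route the paper intends: the paper's own ``proof'' of Theorem \ref{teorema Banach} is simply the remark that one may repeat the arguments of Sections \ref{section 2}--\ref{SupraInjec} on $\mathcal B$, and your step-by-step verification that each ingredient (the isometries $(\phi')^\gamma C_\phi$ with the adjusted $\gamma$, the range spaces and their decomposition, the spectral estimates, and the eigenvector family $g_k f$) transfers verbatim is precisely that argument, carried out more explicitly. You also correctly separate the stated conclusions from the genuinely open complementation issue, matching the paper's own ``Question.''
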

	
	Observe that this result is not sufficient to obtain the universality for the operators $uC_\psi-\lambda I$ acting on the Banach spaces $\B.$ Indeed, by Theorem \ref{caradus Banach}, it would be enough to answer the following question, that remains open, at least for the knowledge of the authors:
	
	\medskip
	
	\noindent \textbf{Question. }  Let $uC_\psi$ be an invertible weighted composition operator with hyperbolic symbol acting on $\mathcal{B}$, where $\mathcal{B}$ denotes the Hardy space $H^p$ or the weighted Bergman space $\mathcal{A}_\sigma^p,$ where $1\leq p < \infty$ and $\sigma >-1.$ Assume that $\frac{B^+}{(\psi'(b))^{\gamma}}< \frac{A^-}{(\psi'(a))^{\gamma}}$ and let $\frac{B^+}{(\psi'(b))^{\gamma}}< |\lambda|< \frac{A^-}{(\psi'(a))^{\gamma}}.$ Is $\ker(uC_\psi-\lambda I)$ a complemented subspace that contains a subspace which is isomorphic to $\B$? 
	
	\medskip
	
We also observe that the analogous results of Proposition \ref{spectralRadiusProp}, Proposition \ref{inclusionSpec} and Theorem \ref{Injec} adapted to $H^p$ and $\mathcal A_\sigma^p$ considerably improve previous results in the literature. Indeed, the spectrum of $uC_{\psi}$ was given in \cite[Theorem 4.9]{hyvarinen2013spectra} provided that the function $u$ belongs to the disc algebra (i.e. $u$ has continuous extension to $\overline \DD,$ and then we write $A:=A^+=A^-$ and $B:=B^+=B^-$) and  the inequality $\frac{B}{(\psi'(b))^{\gamma}}\leq \frac{A}{(\psi'(a))^{\gamma}}$ holds. They also proved in \cite[Remark 4.10]{hyvarinen2013spectra} (mimicking the ideas of \cite[Subsection 3.5]{gunatillake}) that every interior point of $\sigma(uC_\psi)$ is an eigenvalue of infinite multiplicity for $uC_\varphi$ if in addition one assumes that $u'$ is bounded and that $1 < \left|\frac{u(a)}{u(b)}\right| < \varphi'(a)^{-2\gamma}$. So, under the weaker assumption that the modulus of $u$, $|u|$, can be extended continuously to the fixed points of $\psi$ (and in this case we also set $A:=A^+=A^-$ and $B:=B^+ =B^-$) we have the next improvement:

\begin{corollary}\label{Spec} Let $uC_\psi$ be an invertible weighted composition operator with hyperbolic symbol $\psi$ acting on $\mathcal{B}$. Assume further that the modulus of $u$ has continuous extension to the fixed points $a$ and $b,$ and $\frac{B}{(\psi'(b))^{\gamma}}\leq \frac{A}{(\psi'(a))^{\gamma}}$. Then $$\sigma(uC_{\psi}) =\left\{\lambda \in \CC \, : \, \frac{B}{(\psi'(b))^{\gamma}} \leq   |\lambda| \leq  \frac{A}{(\psi'(a))^{\gamma}} \right\}.$$ Moreover, for every $\lambda$ belonging to the interior of $\sigma(uC_\psi)$, $\ker(uC_\psi-\lambda I)$ is infinite dimensional and $uC_\psi -\lambda I$ is surjective.
\end{corollary}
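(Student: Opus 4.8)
The plan is to deduce Corollary \ref{Spec} from the results already established for the Hilbert space case, adapted to $\mathcal B$, together with the Banach space analogues mentioned before its statement. Under the continuity assumption on $|u|$ we have $A = A^+ = A^-$ and $B = B^+ = B^-$, so the inclusion of Proposition \ref{spectralRadiusProp} (in its $\mathcal B$ version) becomes
\[
\sigma(uC_\psi) \subseteq \left\{\lambda \in \CC \, : \, \frac{B}{(\psi'(b))^{\gamma}} \leq |\lambda| \leq \frac{A}{(\psi'(a))^{\gamma}}\right\},
\]
because the maximum and minimum in that proposition collapse to a single value on each side. For the reverse inclusion, I would split into two cases according to whether the inequality $\frac{B}{(\psi'(b))^{\gamma}} \leq \frac{A}{(\psi'(a))^{\gamma}}$ is strict or an equality. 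If it is strict, the $\mathcal B$-analogue of Proposition \ref{inclusionSpec} gives the closed annulus $\{\frac{B}{(\psi'(b))^{\gamma}} \leq |\lambda| \leq \frac{A}{(\psi'(a))^{\gamma}}\} \subseteq \sigma(uC_\psi)$ directly, completing the spectral identity. If instead equality holds, then the alleged spectrum is the circle $\{|\lambda| = \frac{A}{(\psi'(a))^{\gamma}}\}$; here one uses that $\sigma(uC_\psi)$ is a non-empty closed subset of this circle which, by the upper bound above applied also to $(uC_\psi)^{-1}$, is forced to be rotation-invariant under the natural argument (the spectrum of a weighted composition operator with these symmetry properties is a union of circles centered at the origin, as in \cite{hyvarinen2013spectra}), hence equals the full circle; alternatively one simply notes this degenerate case already follows from \cite[Theorem 4.9]{hyvarinen2013spectra} since the continuity of $|u|$ suffices for the spectral radius computations there.

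For the second assertion, fix $\lambda$ in the interior of $\sigma(uC_\psi)$, i.e.\ $\frac{B}{(\psi'(b))^{\gamma}} < |\lambda| < \frac{A}{(\psi'(a))^{\gamma}}$. This is exactly the regime covered by Theorem \ref{teorema Banach}, whose proof is obtained by repeating verbatim the arguments of Theorem \ref{Supra} and Theorem \ref{Injec} with $H$ replaced by $\mathcal B$ and $\gamma$ the corresponding isometry exponent: the range-space machinery of Section \ref{section 2} (Lemmas \ref{isometricCopy} and \ref{H2decomposition}), the spectral estimate of Proposition \ref{spectralRadiusProp}, and the eigenfunction construction $f_k = g_k f$ with $g_k(z) = \bigl(\tfrac{b-z}{a-z}\bigr)^{2\pi k i/\delta}$ all go through unchanged, since each $g_k$ is a bounded holomorphic function and hence a multiplier of $\mathcal B$, and the $\omega_{\mu,\nu}$ remain multipliers of $\mathcal B$ with the same boundary limits \eqref{limitsEq}. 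Thus $\ker(uC_\psi - \lambda I)$ is infinite dimensional and $uC_\psi - \lambda I$ is surjective.

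The only genuinely non-routine point is the equality case $\frac{B}{(\psi'(b))^{\gamma}} = \frac{A}{(\psi'(a))^{\gamma}}$ in the spectral identity, where the annulus degenerates to a circle and Proposition \ref{inclusionSpec} no longer applies as stated; I expect to handle it either by the rotational-symmetry argument for $\sigma(uC_\psi)$ sketched above or by direct appeal to the known formula in \cite{hyvarinen2013spectra}, after checking that only continuity of $|u|$ (not of $u$ itself) is used in the relevant estimates there. Everything else is a transcription of the Hilbert space proofs into the Banach space setting, which is licensed by the discussion preceding the corollary.
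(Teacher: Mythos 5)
Your proposal is correct and takes essentially the same route as the paper: the paper likewise notes that under the continuity hypothesis $A^\pm$ and $B^\pm$ collapse, deduces the strict-inequality case directly from Proposition \ref{spectralRadiusProp}, Proposition \ref{inclusionSpec} and Theorems \ref{Supra}--\ref{Injec} (via Theorem \ref{teorema Banach}), and disposes of the degenerate equality case by declaring it analogous to the proof of Theorem 4.8 in \cite{hyvarinen2013spectra}. Your extra sketch of the rotation-invariance argument for the circle case is a reasonable expansion of what the paper leaves to that citation.
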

\begin{proof} If $\frac{B}{(\psi'(b))^{\gamma}}< \frac{A}{(\psi'(a))^{\gamma}}$ the result follows directly by Proposition \ref{spectralRadiusProp}, Proposition \ref{inclusionSpec} and Theorem \ref{Injec}. The case $\frac{B}{(\psi'(b))^{\gamma}}= \frac{A}{(\psi'(a))^{\gamma}}$ is analogous to the proof of \cite[Theorem 4.8]{hyvarinen2013spectra}.

\end{proof}

	\medskip
	
	Finally, to end up this section, let us point out that the construction carried out throughout Sections \ref{section 2}, \ref{section 3} and \ref{SupraInjec} can be adapted to cover a wide list of classical Banach spaces of analytic functions. We take this viewpoint, as well as the study of the spectral sets of $uC_\psi$ for a general $u$, in a forthcoming paper.
	
\section*{Conflict of interest statement}
The authors state that there is no conflict of interest.

\section*{Data availability}
No data was used for the research described in the article.

\end{document}